\theoremstyle{plain}
\newtheorem{thm}{Theorem}
\newtheorem{prop}[thm]{Proposition}
\newtheorem{lem}[thm]{Lemma}
\theoremstyle{definition}
\newtheorem{defn}[thm]{Definition}
\theoremstyle{remark}
\newtheorem{rem}[thm]{Remark}
\let\amstexbig\big
\def\newbig#1{%
  \ifx#1|%
    \expandafter\@firstoftwo
  \else
    \expandafter\@secondoftwo
  \fi
  {\big@bar}%
  {\amstexbig{#1}}%
}
\def\big@bar{\bBigg@{1.1}|}
\let\save@mathaccent\mathaccent
\newcommand*\if@single[3]{%
  \setbox0\hbox{${\mathaccent"0362{#1}}^H$}%
  \setbox2\hbox{${\mathaccent"0362{\kern0pt#1}}^H$}%
  \ifdim\ht0=\ht2 #3\else #2\fi
  }
\newcommand*\rel@kern[1]{\kern#1\dimexpr\macc@kerna}
\newcommand*\mybar[1]{\@ifnextchar^{{\wide@bar{#1}{0}}}{\wide@bar{#1}{1}}}
\newcommand*\wide@bar[2]{\if@single{#1}{\wide@bar@{#1}{#2}{1}}{\wide@bar@{#1}{#2}{2}}}
\newcommand*\wide@bar@[3]{%
  \begingroup
  \def\mathaccent##1##2{%
    \let\mathaccent\save@mathaccent
    \if#32 \let\macc@nucleus\first@char \fi
    \setbox\z@\hbox{$\macc@style{\macc@nucleus}_{}$}%
    \setbox\tw@\hbox{$\macc@style{\macc@nucleus}{}_{}$}%
    \dimen@\wd\tw@
    \advance\dimen@-\wd\z@
    \divide\dimen@ 3
    \@tempdima\wd\tw@
    \advance\@tempdima-\scriptspace
    \divide\@tempdima 10
    \advance\dimen@-\@tempdima
    \ifdim\dimen@>\z@ \dimen@0pt\fi
    \rel@kern{0.6}\kern-\dimen@
    \if#31
      \overbracket[0.65pt][0pt]{\rel@kern{-0.6}\kern\dimen@\macc@nucleus\rel@kern{0.4}\kern\dimen@}%
      \advance\dimen@0.1\dimexpr\macc@kerna
      \let\final@kern#2%
      \ifdim\dimen@<\z@ \let\final@kern1\fi
      \if\final@kern1 \kern-\dimen@\fi
    \else
       \overbracket[0.65pt][0pt]{\rel@kern{-0.6}\kern\dimen@#1}%
    \fi
  }%
  \macc@depth\@ne
  \let\math@bgroup\@empty \let\math@egroup\macc@set@skewchar
  \mathsurround\z@ \frozen@everymath{\mathgroup\macc@group\relax}%
  \macc@set@skewchar\relax
  \let\mathaccentV\macc@nested@a
  \if#31
    \macc@nested@a\relax111{#1}%
  \else
    \def\gobble@till@marker##1\endmarker{}%
    \futurelet\first@char\gobble@till@marker#1\endmarker
    \ifcat\noexpand\first@char A\else
      \def\first@char{}%
    \fi
    \macc@nested@a\relax111{\first@char}%
  \fi
  \endgroup
}
\global\long\def\rd{\mathrm{d}}
\global\long\def\bu{\boldsymbol{u}}
\global\long\def\bv{\boldsymbol{v}}
\global\long\def\bw{\boldsymbol{w}}
\global\long\def\bd{\boldsymbol{d}}
\global\long\def\bx{\boldsymbol{x}}
\global\long\def\be{\boldsymbol{e}}
\global\long\def\bff{\boldsymbol{f}}
\global\long\def\bF{\boldsymbol{F}}
\global\long\def\bFn{\boldsymbol{R}_{n}}
\global\long\def\bbF{\mathbf{F}}
\global\long\def\bn{\boldsymbol{n}}
\global\long\def\ba{\boldsymbol{a}}
\global\long\def\bc{\boldsymbol{c}}
\global\long\def\bphi{\boldsymbol{\varphi}}
\global\long\def\bpsi{\boldsymbol{\psi}}
\global\long\def\blambda{\boldsymbol{\mu}}
\global\long\def\bnabla{\boldsymbol{\nabla}}
\global\long\def\bcdot{\boldsymbol{\cdot}}
\global\long\def\bzero{\boldsymbol{0}}
\global\long\def\widebar#1{\mybar{#1}}
\global\long\def\loc{\mathrm{loc}}
\global\long\def\weight{\mathfrak{w}}
\global\long\def\cAn{\boldsymbol{\mathcal{A}}_{n}}
\global\long\def\cBn{\boldsymbol{\mathcal{B}}_{n}}
\global\long\def\cLn{\boldsymbol{\mathcal{L}}_{n}}
\begin{document}

\title{Existence and uniqueness\\
of steady weak solutions to the\\
Navier\textendash Stokes equations in $\mathbb{R}^{2}$}

\author{Julien Guillod\textsuperscript{1,2} \qquad{} Peter Wittwer\textsuperscript{3}\\
{\small{}\textsuperscript{1}Mathematics Department, Princeton University}\\
{\small{}\textsuperscript{2}ICERM, Brown University}\\
{\small{}\textsuperscript{3}Department of Theoretical Physics, University
of Geneva}}

\date{March 20, 2017}

\maketitle

\begin{abstract}
The existence of weak solutions to the stationary Navier\textendash Stokes
equations in the whole plane $\mathbb{R}^{2}$ is proven. This particular
geometry was the only case left open since the work of Leray in 1933.
The reason is that due to the absence of boundaries the local behavior
of the solutions cannot be controlled by the enstrophy in two dimensions.
We overcome this difficulty by constructing approximate weak solutions
having a prescribed mean velocity on some given bounded set. As a
corollary, we obtain infinitely many weak solutions in $\mathbb{R}^{2}$
parameterized by this mean velocity, which is reminiscent of the expected
convergence of the velocity field at large distances to any prescribed
constant vector field. This explicit parameterization of the weak
solutions allows us to prove a weak-strong uniqueness theorem for
small data. The question of the asymptotic behavior of the weak solutions
remains however open, when the uniqueness theorem doesn't apply.
\end{abstract}
\textsf{\textbf{Keywords}}\quad{}Navier\textendash Stokes equations,
Steady weak solutions, Whole plane\\
\textsf{\textbf{MSC classes}}\quad{}76D03, 76D05, 35D30, 35A01, 35J60

\section{Introduction}

We consider the stationary Navier\textendash Stokes equations in an
exterior domain $\Omega=\mathbb{R}^{n}\setminus\widebar B$ where
$B$ is a bounded simply connected Lipschitz domain,
\begin{align}
\Delta\bu-\bnabla p & =\bu\bcdot\bnabla\bu+\bff\,, & \bnabla\bcdot\bu & =0\,, & \left.\bu\right|_{\partial\Omega} & =\bu^{*}\,,\label{eq:intro-ns-eq}
\end{align}
with a given forcing term $\bff$ and a boundary condition $\bu^{*}$
if $B$ is not empty. Since the domain is unbounded, we add the following
boundary condition at infinity,
\begin{equation}
\lim_{\left|\bx\right|\to\infty}\bu(\bx)=\bu_{\infty}\,,\label{eq:intro-ns-limit}
\end{equation}
where $\bu_{\infty}\in\mathbb{R}^{n}$ is a constant vector. In his
seminal work, \citet{Leray-Etudedediverses1933} proposed a three-step
method to show the existence of weak solutions to this problem. First,
the boundary conditions $\bu^{*}$ and $\bu_{\infty}$ are lifted
by an extension $\ba$ which satisfies the so-called extension condition.
The second step is to show the existence of weak solutions in bounded
domains. Finally, the third step is to define a sequence of invading
bounded domains that coincide in the limit with the unbounded domain
and show that the induced sequence of solutions converges in some
suitable space. With this strategy, \citet{Leray-Etudedediverses1933}
was able to construct weak solutions in domains with a compact boundary
if the flux through each connected component of the boundary is zero.
The extension of this result to the case where the fluxes are small
was done by \citet[Section X.4]{Galdi-IntroductiontoMathematical2011}
in three dimensions and by \citet{Russo-NoteExteriorTwo-Dimensional2009}
in two dimensions. We note that by elliptic regularity, weak solutions
are automatically two derivatives more regular than the data \citep[Theorem X.1.1]{Galdi-IntroductiontoMathematical2011}.
All these results about weak solutions have essentially only two drawbacks,
both in two dimensions: the validity of \eqref{intro-ns-limit} is
not known and the method of Leray cannot be applied if $\Omega=\mathbb{R}^{2}$.

In three dimensions, the method of Leray can be used to prove the
existence of a weak solution satisfying \eqref{intro-ns-limit} for
any $\bu_{\infty}\in\mathbb{R}^{3}$. By assuming the existence of
a strong solution satisfying various decay conditions at infinity,
\citet{Kozono-unbounded1993} and \citet[\S X.3]{Galdi-IntroductiontoMathematical2011}
proved the uniqueness of weak solutions satisfying the energy inequality.
Moreover, the asymptotic behavior was determined by \citet[Theorem X.8.1]{Galdi-IntroductiontoMathematical2011}
if $\bu_{\infty}\neq\bzero$ and by \citet[Theorem 1]{Korolev.Sverak-largedistanceasymptotics2011}
if $\bu_{\infty}=\bzero$ and the data are small enough. Therefore,
in three dimensions the picture is pretty complete.

In two dimensional exterior domains, the homogeneous Sobolev space
$\dot{H}^{1}(\Omega)$ used in the construction of weak solutions
is too weak to determine the validity of \eqref{intro-ns-limit},
because elements in this function space can even grow at infinity.
Therefore, the results concerning the uniqueness and the asymptotic
behavior of weak solutions in two dimensions are very limited. Concerning
the asymptotic behavior, \citet{Gilbarg.Weinberger-AsymptoticPropertiesof1974,Gilbarg.Weinberger-Asymptoticpropertiesof1978}
proved that either there exists $\bu_{0}\in\mathbb{R}^{2}$ such that
\[
\lim_{\left|\bx\right|\to\infty}\int_{S^{1}}\left|\bu-\bu_{0}\right|^{2}=0\,,\qquad\text{or}\qquad\lim_{\left|\bx\right|\to\infty}\int_{S^{1}}\left|\bu\right|^{2}=\infty\,.
\]
Later on \citet{Amick-Leraysproblemsteady1988} showed that if $\bu^{*}=\bff=\bzero$,
then $\bu\in L^{\infty}(\Omega)$ so that the first alternative must
apply for some $\bu_{0}$. Nevertheless, the question if any prescribed
value at infinity $\bu_{\infty}$ can be obtained this way remains
open in general. For small data and $\bu_{\infty}\neq\bzero$, \citet{Finn-stationarysolutionsNavier1967}
constructed strong solutions satisfying \eqref{intro-ns-limit}. By
assuming that the domain is centrally symmetric, \citet[Theorem 2.27]{Guillod-review2015}
proved the existence of a weak solution with $\bu_{\infty}=\bzero$.
Under additional symmetry assumptions, the existence and asymptotic
decay of solutions with $\bu_{\infty}=\bzero$ was proven under suitable
smallness assumptions \citep{Yamazaki-stationaryNavier-Stokesequation2009,Yamazaki-Uniqueexistence2011,Pileckas-existencevanishing2012,Guillod-review2015}
or specific boundary conditions \citep{Hillairet-mu2013}. We refer
the reader to \citet[Chapter XII]{Galdi-IntroductiontoMathematical2011}
and \citet{Guillod-review2015} for a more complete discussion on
the asymptotic behavior of solutions in two-dimensional unbounded
domains. The question of the uniqueness of weak solutions for small
data is even more open in two-dimensional exterior domains. The reason
is that the value at infinity $\bu_{\infty}$ should be intuitively
part of the data in order to expect uniqueness. The only known results
in that direction are due to \citet{Yamazaki-Uniqueexistence2011}
and \citet{Nakatsuka-uniquenessofsymmetric2013}, who proved the uniqueness
of weak solutions satisfying the energy inequality under suitable
symmetry and smallness assumptions.

The other main issue concerns the construction of weak solutions in
$\Omega=\mathbb{R}^{2}$, which fails due to a fundamental issue with
the function space \citep[Remark X.4.4 \& Section XII.1]{Galdi-IntroductiontoMathematical2011}.
More precisely the completion $\dot{H}_{0}^{1}(\Omega)$ of smooth
compactly supported functions in the semi-norm of $\dot{H}^{1}(\Omega)$
can be viewed as a space of locally defined functions only if $\Omega\neq\mathbb{R}^{2}$.
The example of \citet[Remarque 4.1]{Deny-Lesespaces1954} shows that
the elements of $\dot{H}_{0}^{1}(\mathbb{R}^{2})$ are equivalence
classes and cannot be viewed as functions. The reason is that constant
functions can be approximated by compactly supported functions in
$\dot{H}^{1}(\mathbb{R}^{2})$, hence the function cannot be locally
bounded by its gradient. This can also be viewed as a consequence
of the absence of Poincaré inequality in $\dot{H}^{1}(\mathbb{R}^{2})$.

The main result of this paper (\thmref{weak-solution}) is a modification
of the method of Leray which allows to construct weak solutions in
$\Omega=\mathbb{R}^{2}$. The idea is to construct approximate solutions
in invading balls having a prescribed mean on some fixed bounded set.
This can be done by using the freedom in the choice of the boundary
condition on the boundary of the balls. That way, the local properties
of the approximate solutions are controlled and can be used to prove
that the sequence of approximate solutions converges locally in $L^{p}$-spaces.
The method we are using furnishes as a corollary infinitely many weak
solutions parameterized by the mean $\blambda=\fint_{\omega}\bu$,
where $\omega$ is a fixed bounded set of positive measure. Intuitively
we have recovered the parameter $\bu_{\infty}\in\mathbb{R}^{2}$,
even if the validity of \eqref{intro-ns-limit} remains open. However,
the explicit parametrization by $\blambda$, can be used to prove
a weak-strong uniqueness theorem for small solutions (\thmref{uniqueness}).
This is done in the spirit of what is known in three dimensions \citep[Theorem X.3.2]{Galdi-IntroductiontoMathematical2011}
and is the first general uniqueness result available in two dimensions.
We remark that the existence of a parametrization of the two-dimensional
weak solutions by two real parameters is open when $\partial\Omega\neq\emptyset$,
and in this case it is not clear that the mean $\blambda=\fint_{\omega}\bu$
will be such a parametrization. A more detailed discussion of the
results is added at the end of \secref{main}.

\paragraph{Notations}

The open ball of radius $n$ centered at the origin is denoted by
$B_{n}$. For $\bx\in\mathbb{R}^{d}$, we define $\langle\bx\rangle=1+\left|\bx\right|$
and the weight $\weight(\bx)=\bigl[\langle\bx\rangle\langle\log\langle\bx\rangle\rangle\bigr]^{-1}$.
The mean value of a vector field on a bounded set $\omega$ of positive
measure is written as $\fint_{\omega}\bu=\frac{1}{\left|\omega\right|}\int_{\omega}\bu$.
The space of smooth solenoidal functions having compact support in
$\Omega$ is denoted by $C_{0,\sigma}^{\infty}(\Omega)$. We denote
by $\dot{H}^{1}(\Omega)$ the linear space $\left\{ \bu\in L_{\loc}^{1}(\Omega):\,\bnabla\bu\in L^{2}(\Omega)\right\} $
with the semi-norm $\left\Vert \bu\right\Vert _{\dot{H}^{1}(\Omega)}=\left\Vert \bnabla\bu\right\Vert _{L^{2}(\Omega)}$.
The subspace of weakly divergence-free vectors fields in $\dot{H}^{1}(\Omega)$
is written as $\dot{H}_{\sigma}^{1}(\Omega)$. Let $\dot{H}_{0,\sigma}^{1}(\Omega)$
denote the completion of $C_{0,\sigma}^{\infty}(\Omega)$ in the semi-norm
of $\dot{H}^{1}(\Omega)$.

\section{\label{sec:main}Main results}

We first recall the standard notion of weak solutions to the stationary
Navier\textendash Stokes equations:
\begin{defn}
\label{def:weak-solution}Let $\Omega\subset\mathbb{R}^{2}$ be any
Lipshitz domain (in particular $\Omega=\mathbb{R}^{2}$ is allowed).
Given $\bu^{*}\in W^{1/2,2}(\partial\Omega)$ and a rank-two tensor
$\bbF\in L^{2}(\Omega)$, a vector field $\bu:\Omega\to\mathbb{R}^{n}$
is called a weak solution of the Navier\textendash Stokes equations
\eqref{intro-ns-eq} in $\Omega$ with $\bff=\bnabla\bcdot\bbF$ if
\begin{enumerate}
\item $\bu\in\dot{H}_{\sigma}^{1}(\Omega)$\,;
\item $\left.\bu\right|_{\partial\Omega}=\bu^{*}$ in the trace sense\,;
\item $\bu$ satisfies
\begin{equation}
\bigl\langle\bnabla\bu,\bnabla\bphi\bigr\rangle_{L^{2}(\Omega)}+\bigl\langle\bu\bcdot\bnabla\bu,\bphi\bigr\rangle_{L^{2}(\Omega)}=\bigl\langle\bbF,\bnabla\bphi\bigr\rangle_{L^{2}(\Omega)}\label{eq:weak-solution}
\end{equation}
for all $\bphi\in C_{0,\sigma}^{\infty}(\Omega)$\,.
\end{enumerate}
\end{defn}
The existence of weak solutions in two-dimensional unbounded domains
was first proved by \citet{Leray-Etudedediverses1933} for vanishing
flux through the boundaries and was extended to the case of small
fluxes by \citet{Russo-NoteExteriorTwo-Dimensional2009}:
\begin{thm}
\label{thm:weak-exterior}Let $\Omega\subset\mathbb{R}^{2}$ be an
exterior domain having a compact connected Lipschitz boundary $\partial\Omega\neq\emptyset$.
Let $\bu^{*}\in W^{1/2,2}(\partial\Omega)$ and $\bbF\in L^{2}(\Omega)$.
If the flux
\[
\Phi=\int_{\partial\Omega}\bu^{*}\bcdot\bn\,,
\]
satisfies $\left|\Phi\right|<2\pi$, then there exists a weak solution
$\bu\in\dot{H}_{\sigma}^{1}(\Omega)$ of the Navier\textendash Stokes
equations \eqref{intro-ns-eq} in $\Omega$.\end{thm}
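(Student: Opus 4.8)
The plan is to carry out Leray's three-step scheme in the sharp form due to \citet[Section~X.4]{Galdi-IntroductiontoMathematical2011} and \citet{Russo-NoteExteriorTwo-Dimensional2009}. The only genuinely two-dimensional obstacle is the construction of a solenoidal extension of the boundary datum that is compatible with a flux of size up to $2\pi$; the rest is the classical invading-domains construction, which works here precisely because $\Omega\neq\mathbb{R}^{2}$ provides a fixed bounded collar of $\partial\Omega$ and because $\dot{H}_{0,\sigma}^{1}(\Omega)$ is a genuine space of functions when $\partial\Omega\neq\emptyset$.

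\emph{Step 1 (homogenization).} First I would construct a field $\ba\in\dot{H}_{\sigma}^{1}(\Omega)$ with $\left.\ba\right|_{\partial\Omega}=\bu^{*}$ in the trace sense, coinciding outside a fixed bounded set with a flux carrier, and subject to the \emph{extension condition}: for every $\varepsilon>0$ there is a choice $\ba=\ba_{\varepsilon}$ with
\[
\bigl|\langle\bv\bcdot\bnabla\ba_{\varepsilon},\bv\rangle_{L^{2}(\Omega)}\bigr|\leq\varepsilon\,\|\bnabla\bv\|_{L^{2}(\Omega)}^{2}\qquad\text{for all }\bv\in C_{0,\sigma}^{\infty}(\Omega).
\]
Writing $\bu=\bw+\ba$, the task reduces to finding $\bw\in\dot{H}_{0,\sigma}^{1}(\Omega)$ with
\[
\langle\bnabla\bw,\bnabla\bphi\rangle+\langle\bw\bcdot\bnabla\bw,\bphi\rangle+\langle\bw\bcdot\bnabla\ba,\bphi\rangle+\langle\ba\bcdot\bnabla\bw,\bphi\rangle=\langle\bbF,\bnabla\bphi\rangle-\langle\bnabla\ba,\bnabla\bphi\rangle-\langle\ba\bcdot\bnabla\ba,\bphi\rangle
\]
for all $\bphi\in C_{0,\sigma}^{\infty}(\Omega)$. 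Testing formally with $\bphi=\bw$ kills $\langle\bw\bcdot\bnabla\bw,\bw\rangle$ and $\langle\ba\bcdot\bnabla\bw,\bw\rangle$ by solenoidality, while the extension condition with $\varepsilon<1$ absorbs $\langle\bw\bcdot\bnabla\ba,\bw\rangle$ into the left-hand side, yielding the a priori bound $(1-\varepsilon)\|\bnabla\bw\|_{L^{2}(\Omega)}^{2}\leq C\bigl(\|\bnabla\ba\|_{L^{2}},\|\ba\|_{L^{4}},\|\bbF\|_{L^{2}}\bigr)$, uniform in everything below.

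\emph{Step 2 (the extension and the threshold $2\pi$; the main obstacle).} Fix $\bx_{0}\in B$ and split $\bu^{*}=\bu_{\sharp}^{*}+\tfrac{\Phi}{2\pi}\bs$, where $\bs$ is the boundary trace of $\ba_{0}=\tfrac{1}{2\pi}\bnabla\log|\bx-\bx_{0}|$, so that $\bu_{\sharp}^{*}$ has vanishing flux. The zero-flux part is extended by the Hopf construction with a logarithmic cut-off: a compactly supported potential whose curl equals $\bu_{\sharp}^{*}$ on $\partial\Omega$ is localized on a collar of $\partial\Omega$ of logarithmically small width, which forces the associated trilinear form below any prescribed $\varepsilon$, with no size restriction. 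The flux carrier is solenoidal on $\Omega$ with $\bnabla\ba_{0}=\tfrac{1}{2\pi}\operatorname{Hess}(\log|\bx-\bx_{0}|)$, whose eigenvalues are $\pm(2\pi|\bx-\bx_{0}|^{2})^{-1}$, so controlling $\Phi\,\langle\bv\bcdot\bnabla\ba_{0},\bv\rangle$ comes down to a two-dimensional Hardy-type inequality with weight $|\bx-\bx_{0}|^{-2}$. Since the unweighted Hardy inequality fails in the plane, the crucial point is that $\bv$ is divergence free and vanishes on $\partial\Omega$: the angular mean of $\bv$ carries no flux, hence no radial component, which eliminates the offending Fourier mode and gives $|\langle\bv\bcdot\bnabla\ba_{0},\bv\rangle|\le\tfrac{1}{2\pi}\|\bnabla\bv\|_{L^{2}(\Omega)}^{2}$ with the sharp constant. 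Adding the Hopf contribution, the extension condition then holds with some $\varepsilon<1$ exactly when $|\Phi|<2\pi$. Making this precise for a general Lipschitz $\partial\Omega$ rather than a circle --- in particular matching the Hopf collar to the region in which the weighted inequality is applied, and checking the $L^{4}$ bounds on $\ba_{0}$ --- is the technical heart of the proof.

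\emph{Step 3 (invading domains and passage to the limit).} On the bounded Lipschitz domain $\Omega_{k}=\Omega\cap B_{k}$ I would solve the homogenized problem for $\bw_{k}\in\dot{H}_{0,\sigma}^{1}(\Omega_{k})$: existence follows from a Galerkin argument (or a Leray\textendash Schauder degree argument), and, since $\ba$ and all its norms are independent of $k$ (for $k$ large the Hopf collar lies inside $\Omega_{k}$), the bound of Step~1 gives $(1-\varepsilon)\|\bnabla\bw_{k}\|_{L^{2}}^{2}\leq C$ uniformly in $k$. Extending each $\bw_{k}$ by $\bzero$ outside $B_{k}$, the sequence is bounded in $\dot{H}^{1}(\Omega)$; because every $\bw_{k}$ vanishes on $\partial\Omega$, the Poincar\'{e} inequality on any fixed $\Omega\cap B_{R}$ furnishes a uniform $H^{1}(\Omega\cap B_{R})$ bound, so by the Rellich theorem and a diagonal argument a subsequence converges strongly in $L_{\loc}^{2}$ and weakly in $\dot{H}^{1}(\Omega)$ to some $\bw\in\dot{H}_{0,\sigma}^{1}(\Omega)$. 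Fixing $\bphi\in C_{0,\sigma}^{\infty}(\Omega)$, its support lies in some $B_{R}$, so $\bphi$ is admissible for $\Omega_{k}$ once $k>R$; the strong $L_{\loc}^{2}$ convergence lets one pass to the limit in $\langle\bw_{k}\bcdot\bnabla\bw_{k},\bphi\rangle$ and in the mixed terms, the remaining terms passing by weak convergence. Hence $\bw$ solves the homogenized equation and $\bu=\bw+\ba\in\dot{H}_{\sigma}^{1}(\Omega)$ is a weak solution with $\left.\bu\right|_{\partial\Omega}=\bu^{*}$ and $\bnabla\bcdot\bu=0$ inherited from $\bw$ and $\ba$. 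The only real work is Step~2; the collapse of this scheme for $\Omega=\mathbb{R}^{2}$ is exactly the loss of the boundary piece on which the Poincar\'{e} inequality is used, the difficulty that \thmref{weak-solution} is devoted to circumventing.
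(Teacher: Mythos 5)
The paper does not prove this theorem at all: it is stated as a known result, attributed to Leray (zero flux) and to Russo's 2009 note (small flux $|\Phi|<2\pi$), and the body of the paper only uses it as background. So there is no in-paper proof to compare against; what you have written is a reconstruction of the cited literature. Your architecture is the right one and matches that literature: Leray's three steps, a solenoidal extension satisfying the extension condition, the splitting of $\bu^{*}$ into a zero-flux part (handled by Hopf's logarithmic cut-off, with no size restriction) plus a multiple of the flux carrier $\ba_{0}=\tfrac{1}{2\pi}\bnabla\log|\bx-\bx_{0}|$, and invading domains with Rellich compactness on fixed balls --- the last step working precisely because $\partial\Omega\neq\emptyset$ anchors the Poincar\'e inequality, which is the point the rest of the paper is about.

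The one place where your sketch is not merely incomplete but misleading is the justification of the sharp bound $\bigl|\langle\bv\bcdot\bnabla\ba_{0},\bv\rangle\bigr|\le\tfrac{1}{2\pi}\|\bnabla\bv\|_{L^{2}}^{2}$. In polar coordinates centered at $\bx_{0}$ one has $\bv\bcdot\bnabla\ba_{0}\bcdot\bv=\tfrac{1}{2\pi}\,r^{-2}\bigl(v_{\theta}^{2}-v_{r}^{2}\bigr)$, so the pointwise eigenvalue bound reduces matters to a critical Hardy inequality for \emph{both} components. The zero-flux observation you invoke (the angular mean of $v_{r}$ on each circle vanishes) kills the dangerous zero Fourier mode only of the \emph{radial} term $r^{-2}v_{r}^{2}$. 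It says nothing about $r^{-2}v_{\theta}^{2}$: the angular mean of $v_{\theta}$ is $(2\pi r)^{-1}$ times the circulation of $\bv$ around the circle of radius $r$, and there is no reason for that to vanish for a general solenoidal $\bv$ vanishing on $\partial\Omega$. Since the unweighted critical Hardy inequality fails exactly on that zero mode, your one-sentence argument does not close; the actual proof has to exploit the sign structure of $v_{\theta}^{2}-v_{r}^{2}$ together with an identity such as $\int\bv\bcdot\bnabla\bv\bcdot\bnabla\log r=-\int\omega v_{\theta}/r$ (with $\omega$ the vorticity) and a Hardy-type estimate adapted to solenoidal fields. You correctly flag this step as the technical heart, but the specific mechanism you propose for it is not the one that works, so as written the sketch has a genuine gap precisely at the point that produces the threshold $2\pi$.
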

\begin{rem}
\label{rem:exterior-L2-compact}For $\partial\Omega\neq\emptyset$,
if $\bff\in L^{2}(\Omega)$ is a source term of compact support, then
there exists $\bbF\in L^{2}(\Omega)$ such that $\bff=\bnabla\bcdot\bF$.
See \lemref{exterior-representation} for a more general result in
this direction.
\end{rem}

\begin{rem}
This result can be easily extended to the case where the boundary
$\partial\Omega$ has finitely many connected components, provided
the flux through each connected component is small enough.
\end{rem}

\begin{rem}
The three-dimensional analogue of this theorem is valid even if $\partial\Omega=\emptyset$,
\emph{i.e.} if $\Omega=\mathbb{R}^{3}$, see \citet[Theorem X.4.1]{Galdi-IntroductiontoMathematical2011}.
\end{rem}
As explained in the introduction, the method used to prove \thmref{weak-exterior}
fails for $\Omega=\mathbb{R}^{2}$. Our main result is the existence
of infinitely many weak solutions in $\mathbb{R}^{2}$ for every given
$\bF$:
\begin{thm}
\label{thm:weak-solution}Let $\Omega=\mathbb{R}^{2}$ and $\omega\subset\Omega$
be a bounded subset of positive measure. Let $\bbF\in L^{2}(\Omega)$
be a rank-two tensor. Then for any $\blambda\in\mathbb{R}^{2}$, there
exists a weak solution $\bu\in\dot{H}_{\sigma}^{1}(\Omega)$ of the
Navier\textendash Stokes equations \eqref{intro-ns-eq} in $\Omega$
such that $\fint_{\omega}\bu=\blambda$. Moreover,
\begin{equation}
\bigl\Vert\bnabla\bu\bigr\Vert_{L^{2}(\Omega)}^{2}\leq\bigl\langle\bbF,\bnabla\bu\bigr\rangle_{L^{2}(\Omega)}\,,\label{eq:energy-inequality}
\end{equation}
so $\left\Vert \bnabla\bu\right\Vert _{L^{2}(\Omega)}\leq\left\Vert \bbF\right\Vert _{L^{2}(\Omega)}$.\end{thm}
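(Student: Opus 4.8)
The plan is to implement Leray's three-step invading-domains scheme, but with a crucial modification on the boundary condition that fixes the mean on $\omega$. First I would fix $n$ large enough that $\omega\subset B_n$, and on the ball $B_n$ solve the Navier--Stokes equations with boundary data on $\partial B_n$ not taken to be zero, but instead chosen so that the resulting solution $\bu_n$ satisfies $\fint_\omega\bu_n=\blambda$. Concretely, one writes $\bu_n=\bv_n+\ba_n$, where $\ba_n$ is a divergence-free extension of a boundary field $\bphi_n$ on $\partial B_n$ whose job is both to realize the prescribed mean and to be small in $\dot H^1$; this is where the vanishing total flux through $\partial B_n$ is used (so the extension condition of Leray holds). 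A fixed-point / Galerkin argument on the bounded domain $B_n$ then yields a weak solution $\bu_n\in\dot H^1_\sigma(B_n)$, and the energy identity on the bounded domain gives $\|\bnabla\bu_n\|_{L^2(B_n)}^2\le\langle\bbF,\bnabla\bu_n\rangle_{L^2(B_n)}$, hence the uniform bound $\|\bnabla\bu_n\|_{L^2(B_n)}\le\|\bbF\|_{L^2(B_n)}\le\|\bbF\|_{L^2(\Omega)}$.

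Next I would extract a weakly convergent subsequence. Extend each $\bu_n$ by zero outside $B_n$ (or keep it on $B_n$ and use a diagonal argument over fixed compact sets); the uniform $\dot H^1$ bound gives $\bnabla\bu_n\rightharpoonup\bnabla\bu$ weakly in $L^2(\Omega)$ for some $\bu$ with $\bnabla\bu\in L^2$. The essential point — and the reason the naive scheme fails for $\mathbb R^2$ — is that $\dot H^1$ does not control $\bu_n$ locally; but now the constraint $\fint_\omega\bu_n=\blambda$ together with a Poincaré--Wirtinger inequality on a ball containing $\omega$ pins down the local $L^2$ norm: $\|\bu_n-\blambda\|_{L^2(\omega)}\lesssim\|\bnabla\bu_n\|_{L^2}$, and then a chain of Poincaré inequalities on nested balls propagates this to $\|\bu_n\|_{L^2(B_R)}\le C(R)$ for every fixed $R$. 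By Rellich--Kondrachov on each $B_R$ and a diagonal extraction, $\bu_n\to\bu$ strongly in $L^2_{\loc}$, in fact in $L^p_{\loc}$ for every $p<\infty$ by interpolation with the uniform $\dot H^1$ (hence uniform $L^q_{\loc}$ for all $q<\infty$) bound.

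With strong $L^p_{\loc}$ convergence in hand I would pass to the limit in the weak formulation \eqref{eq:weak-solution} on $B_n$: for a fixed test field $\bphi\in C^\infty_{0,\sigma}(\mathbb R^2)$, choose $n$ so large that $\operatorname{supp}\bphi\subset B_n$; the linear term $\langle\bnabla\bu_n,\bnabla\bphi\rangle$ passes by weak $L^2$ convergence, the right-hand side $\langle\bbF,\bnabla\bphi\rangle$ is fixed, and the trilinear term $\langle\bu_n\bcdot\bnabla\bu_n,\bphi\rangle=-\langle\bu_n\otimes\bu_n,\bnabla\bphi\rangle$ passes because $\bu_n\to\bu$ strongly in $L^2(\operatorname{supp}\bphi)$ so $\bu_n\otimes\bu_n\to\bu\otimes\bu$ strongly in $L^1$, against the bounded $\bnabla\bphi$. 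One must also check $\bnabla\bcdot\bu=0$ weakly (immediate, since each $\bu_n$ is) and the contribution of $\ba_n$ on $\partial B_n$ vanishes in the limit provided $\|\bnabla\ba_n\|_{L^2}\to0$, which one arranges in the construction of the extension. Finally $\fint_\omega\bu=\lim\fint_\omega\bu_n=\blambda$ by the strong $L^1(\omega)$ convergence, and the energy inequality \eqref{eq:energy-inequality} follows by weak lower semicontinuity of the $L^2$ norm applied to $\|\bnabla\bu_n\|_{L^2(B_R)}^2\le\langle\bbF,\bnabla\bu_n\rangle_{L^2(B_R)}$ and then letting $R\to\infty$.

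The main obstacle is the construction of the boundary extensions $\ba_n$ and the proof that the local bound is uniform in $n$: one needs, for each $n$, a divergence-free field on $B_n$ with a prescribed mean $\blambda$ on $\omega$ whose Dirichlet energy on $B_n$ stays bounded (ideally tends to $0$) as $n\to\infty$, \emph{and} one needs the Poincaré-chain constant $C(R)$ to be independent of $n$ — the latter requires that once $\fint_\omega\bu_n$ is fixed, the inequality $\|\bu_n\|_{L^2(B_R)}\le C(R)\bigl(|\blambda|+\|\bnabla\bu_n\|_{L^2(B_n)}\bigr)$ genuinely holds with $C(R)$ depending only on $R$ and $\omega$. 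Handling the nonlinear term in the limit is comparatively routine once the $L^2_{\loc}$ compactness is established; the whole novelty is that fixing the mean on $\omega$ is exactly the substitute for the missing Poincaré inequality on $\mathbb R^2$, replacing the role that boundary data plays in the exterior-domain case of \thmref{weak-exterior}.
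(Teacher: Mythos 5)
Your overall architecture coincides with the paper's: invading balls $B_{n}$, a nonstandard boundary condition on $\partial B_{n}$ used to pin the mean $\fint_{\omega}\bu_{n}=\blambda$, a generalized Poincar\'e inequality anchored at $\omega$ to recover local $L^{2}$ compactness uniformly in $n$, and a routine passage to the limit in the weak formulation. The limit step and the treatment of the nonlinear term are fine (one cosmetic slip: the energy identity holds on all of $B_{n}$ and does not localize to sub-balls $B_{R}$, so you should take the liminf of the identity on $B_{n}$ directly rather than ``letting $R\to\infty$'').

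The genuine gap is in the construction of the approximate solutions, precisely the step you yourself flag as ``the main obstacle.'' You write $\bu_{n}=\bv_{n}+\ba_{n}$ with $\bv_{n}$ vanishing on $\partial B_{n}$ and ask the extension $\ba_{n}$ to ``realize the prescribed mean.'' But $\fint_{\omega}\bu_{n}=\fint_{\omega}\bv_{n}+\fint_{\omega}\ba_{n}$, and $\bv_{n}$ --- the unknown of the fixed-point problem, which vanishes on $\partial B_{n}$ but certainly not on $\omega$ --- contributes to this mean in a way that is not known before solving. Hence no a priori choice of $\ba_{n}$ can enforce $\fint_{\omega}\bu_{n}=\blambda$; the boundary datum must be coupled to the solution itself. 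The paper's device is the ansatz $\bu_{n}=\blambda+\bv_{n}-\fint_{\omega}\bv_{n}$ with $\bv_{n}\in\dot{H}_{0,\sigma}^{1}(B_{n})$: the mean condition then holds identically, the boundary value is the \emph{constant} $\blambda+\bc_{n}$ with $\bc_{n}=-\fint_{\omega}\bv_{n}$ determined a posteriori, and the correction $-\fint_{\omega}\bv_{n}$ is absorbed into the nonlinear map, which remains completely continuous on $\dot{H}_{0,\sigma}^{1}(B_{n})$ because $\bv\mapsto\fint_{\omega}\bv$ is controlled by the $L^{4}(B_{n})$ norm; Leray--Schauder then applies. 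This also disposes of your two residual worries: the lift being a constant, its Dirichlet energy is zero (no extension condition is needed, and the flux through $\partial B_{n}$ is automatically zero), and the transport term $\bigl\langle(\blambda+\bv_{n}-\fint_{\omega}\bv_{n})\bcdot\bnabla\bv_{n},\bv_{n}\bigr\rangle_{L^{2}(B_{n})}$ vanishes exactly, so the energy identity $\bigl\Vert\bnabla\bu_{n}\bigr\Vert_{L^{2}(B_{n})}^{2}=\bigl\langle\bbF,\bnabla\bu_{n}\bigr\rangle_{L^{2}(B_{n})}$ is exact rather than approximate. Without some such coupling of the boundary constant to the solution, your outline does not close.
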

\begin{rem}
\label{rem:R2-L2-compact}For $\Omega=\mathbb{R}^{2}$, if $\bff\in L^{2}(\Omega)$
is a source term of compact support and $\int_{\Omega}\bff=\bzero$,
then there exists $\bbF\in L^{2}(\Omega)$ such that $\bff=\bnabla\bcdot\bF$.
See \lemref{R2-representation} for a more general result in this
direction.
\end{rem}

\begin{rem}
In this result the set $\omega$ can be easily replaced by a bounded
and uniformly Lipschitz arc $\omega\subset\mathbb{R}^{2}$ of positive
one-dimensional measure.
\end{rem}
Finally, with our parametrization of weak solutions by the average
$\blambda$, we can prove a weak-strong uniqueness theorem for small
data:
\begin{thm}
\label{thm:uniqueness}Let $\Omega=\mathbb{R}^{2}$ and $\omega\subset\Omega$
be a bounded subset of positive measure. Let $\bu$ and $\tilde{\bu}$
be two weak solutions of the Navier\textendash Stokes equations \eqref{intro-ns-eq}
in $\Omega$ for the same source term $\bF\in L^{2}(\Omega)$, having
the same mean value $\fint_{\omega}\bu=\fint_{\omega}\tilde{\bu}$,
and satisfying the energy inequality \eqref{energy-inequality}. There
exists $\delta>0$ depending only on $\omega$ such that if
\begin{equation}
\left|\tilde{\bu}(\bx)-\bu_{\infty}\right|\leq\frac{\delta}{\langle\bx\rangle\langle\log\langle\bx\rangle\rangle}\,,\label{eq:bound-ubar}
\end{equation}
for some $\bu_{\infty}\in\mathbb{R}^{2}$, then $\bu=\tilde{\bu}$.
\end{thm}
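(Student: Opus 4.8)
The plan is to adapt the weak--strong uniqueness argument of \citet[Theorem X.3.2]{Galdi-IntroductiontoMathematical2011} to the plane, the missing control of the local behaviour by the Dirichlet energy being compensated by a Hardy-type inequality tuned to the weight $\weight$ and by the prescription of the mean. Write $\bw=\bu-\tilde{\bu}\in\dot{H}_{\sigma}^{1}(\mathbb{R}^{2})$ and $\tilde{\bv}=\tilde{\bu}-\bu_{\infty}$, so that $\fint_{\omega}\bw=\bzero$ and $\lvert\tilde{\bv}(\bx)\rvert\le\delta\,\weight(\bx)$ by \eqref{bound-ubar}. The first ingredient is the weighted Poincaré inequality
\begin{equation*}
\bigl\Vert\weight\,\bg\bigr\Vert_{L^{2}(\mathbb{R}^{2})}\le C_{P}(\omega)\,\bigl\Vert\bnabla\bg\bigr\Vert_{L^{2}(\mathbb{R}^{2})}\qquad\text{for all }\bg\in\dot{H}^{1}(\mathbb{R}^{2})\text{ with }\fint_{\omega}\bg=\bzero,
\end{equation*}
which is precisely what dictates the form of $\weight$: after subtracting from $\bg$ its mean on a ball $B$ containing $\omega$, it reduces, in polar coordinates, to the one-dimensional Hardy inequality with the borderline logarithmic weight (plain Hardy failing in $\mathbb{R}^{2}$) together with the ordinary Poincaré inequality near $B$, the discrepancy between the mean on $B$ and $\fint_{\omega}\bg=\bzero$ being in turn absorbed by Poincaré. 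Applied to $\bw$ it gives $\Vert\weight\bw\Vert_{L^{2}}\le C_{P}(\omega)\Vert\bnabla\bw\Vert_{L^{2}}$, and we set $\delta_{0}:=1/C_{P}(\omega)$, which depends only on $\omega$.

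The heart of the matter is the relative-energy estimate
\begin{equation*}
\bigl\Vert\bnabla\bw\bigr\Vert_{L^{2}(\mathbb{R}^{2})}^{2}\le\Bigl\lvert\,\int_{\mathbb{R}^{2}}\bigl(\bw\bcdot\bnabla\bw\bigr)\bcdot\tilde{\bv}\,\Bigr\rvert.
\end{equation*}
Formally, pairing the two weak formulations \eqref{weak-solution} with each other and with the difference $\bw$ and combining with the energy inequality \eqref{energy-inequality} satisfied by $\bu$ and by $\tilde{\bu}$, all the cubic and quadratic nonlinear contributions either vanish because $\bu$, $\tilde{\bu}$ and $\bw$ are divergence free, or reduce to boundary integrals at infinity that vanish thanks to the decay $\lvert\tilde{\bv}\rvert\le\delta\weight$; what remains is $\Vert\bnabla\bw\Vert_{L^{2}}^{2}\le-\langle\bw\bcdot\bnabla\tilde{\bu},\bw\rangle_{L^{2}}=\langle\bw\bcdot\bnabla\bw,\tilde{\bv}\rangle_{L^{2}}$. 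Granting this,
\begin{equation*}
\bigl\Vert\bnabla\bw\bigr\Vert_{L^{2}}^{2}\le\int_{\mathbb{R}^{2}}\lvert\tilde{\bv}\rvert\,\lvert\bw\rvert\,\lvert\bnabla\bw\rvert\le\delta\int_{\mathbb{R}^{2}}\weight\,\lvert\bw\rvert\,\lvert\bnabla\bw\rvert\le\delta\,\bigl\Vert\weight\bw\bigr\Vert_{L^{2}}\bigl\Vert\bnabla\bw\bigr\Vert_{L^{2}}\le\delta\,C_{P}(\omega)\,\bigl\Vert\bnabla\bw\bigr\Vert_{L^{2}}^{2},
\end{equation*}
so that for $\delta<\delta_{0}$ we obtain $\bnabla\bw=\bzero$; hence $\bw$ is constant on $\mathbb{R}^{2}$ and, since $\fint_{\omega}\bw=\bzero$, $\bw=\bzero$, i.e. $\bu=\tilde{\bu}$.

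The step I expect to be the main obstacle is the rigorous justification of the relative-energy estimate, that is, turning the formal test-function computation into a legitimate one. None of $\tilde{\bv}$, $\bu-\blambda$ (with $\blambda:=\fint_{\omega}\bu$) and $\bw$ is compactly supported, so each must be approximated in the $\dot{H}^{1}$ seminorm by elements of $C_{0,\sigma}^{\infty}(\mathbb{R}^{2})$; the difficulty, specific to two dimensions, is that functions with gradient in $L^{2}$ need not decay, so truncation errors are controlled only through the weighted information at hand, namely $\weight\bw\in L^{2}$ and $\weight(\bu-\blambda)\in L^{2}$, the latter furnished by \eqref{energy-inequality} and the inequality above. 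I would therefore truncate with a cut-off $\chi_{R}$ whose transition occurs on an annulus on which $\log\log\lvert\bx\rvert$ increases by a controlled amount, so that $\lvert\bnabla\chi_{R}\rvert\lesssim\weight$ and the resulting errors become tails of the convergent integrals $\int\weight^{2}\lvert\bw\rvert^{2}$ and $\int\lvert\bnabla\bw\rvert^{2}$, and restore the solenoidal constraint by a stream-function (available in two dimensions) or Bogovskii correction, whose contribution is absorbed using the additional logarithmic gain carried by $\weight$, the bound $\Vert\bnabla\bu\Vert_{L^{2}}\le\Vert\bbF\Vert_{L^{2}}$, and the local regularity $\bu,\tilde{\bu}\in W_{\loc}^{2,2}(\mathbb{R}^{2})$ given by elliptic regularity. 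Along such truncations one verifies that every term of the paired equations passes to the limit and that the boundary integrals at infinity vanish, the decay hypothesis \eqref{bound-ubar} entering both to make $\tilde{\bv}\in H^{1}(\mathbb{R}^{2})\cap L^{\infty}(\mathbb{R}^{2})$ an admissible test function and to control the remaining nonlinear and boundary terms.
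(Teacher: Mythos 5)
Your argument is essentially the paper's: the same weighted Poincar\'e/Hardy inequality anchored at $\omega$ (the paper's \lemref{generalized-hardy}), the same relative-energy identity $\bigl\Vert\bnabla\bw\bigr\Vert_{L^{2}}^{2}\leq\bigl\langle\bw\bcdot\bnabla\bw,\tilde{\bv}\bigr\rangle_{L^{2}}$ obtained by testing each weak formulation against the other solution and invoking the two energy inequalities, and the same absorption for $\delta$ small. The one step you leave as a sketch --- justifying the test-function manipulations --- is exactly where the paper spends its effort (\lemref{ubar-approx,ubar-in-u,u-in-ubar,ipp}), and your proposed route (cut off on annuli adapted to iterated logarithms, restore solenoidality via the stream function, control errors by $\weight\bw\in L^{2}$ and the $\log\log$ growth of the stream function) is precisely how the paper carries it out.
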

We now discuss our results in more detail. The space $\dot{H}^{1}(\Omega)$
is not a Banach space since the constant vector fields are in the
kernel of the semi-norm, but $\dot{H}^{1}(\Omega)$ can be viewed
as a sort of graded space. In the presence of a nontrivial boundary,
this problem can be fixed by using the completion $\dot{H}_{0}^{1}(\Omega)$
of smooth compactly supported functions in the semi-norm of $\dot{H}^{1}(\Omega)$.
Intuitively, there is no more freedom in the choice of the constant,
since the elements of $\dot{H}_{0}^{1}(\Omega)$ are vanishing on
the boundary $\partial\Omega$.

When the boundary is trivial, \emph{i.e.} $\Omega=\mathbb{R}^{n}$,
the boundary can not serve as an anchor anymore to fix the problem
of the constants. The solution of this problem now depends on the
dimension. For $\Omega=\mathbb{R}^{3}$, the constants do not belong
to the completion $\dot{H}_{0}^{1}(\Omega)$, the reason being the
Sobolev embedding into $L^{6}(\Omega)$. Therefore, the space $\dot{H}^{1}(\Omega)$
is in some sense naturally graded by the constant at infinity $\bu_{\infty}\in\mathbb{R}^{3}$
in three dimensions.

For $\Omega=\mathbb{R}^{2}$, the constants belong to the completion
$\dot{H}_{0}^{1}(\Omega)$ of smooth compactly supported functions
in the semi-norm of $\dot{H}^{1}(\Omega)$, so $\dot{H}_{0}^{1}(\Omega)$
is a space of equivalence classes defined by the relation of being
equal up to a constant vector field. Therefore, $\dot{H}_{0}^{1}(\Omega)$
cannot be viewed as a space of locally defined functions. To overcome
this difficulty, we choose to graduate the space $\dot{H}^{1}(\Omega)$
by the mean $\blambda\in\mathbb{R}^{2}$ of the vector field on $\omega$.
Intuitively, this is a recovery of the parameter $\bu_{\infty}\in\mathbb{R}^{2}$,
which is lost in two dimensions during the completion. This new way
of parameterizing the function space in two dimensions is crucial
to prove the existence of weak solutions and also for the weak-strong
uniqueness result.

Concerning our weak-strong uniqueness result, we note that we don't
except the existence of a solution $\tilde{\bu}$ satisfying \eqref{bound-ubar}
for all $\bF\in L^{2}(\Omega)$. In fact, we can easily construct
counterexamples. For $\bu_{\infty}\neq\bzero$, the derivative of
a suitable smoothing of the Oseen fundamental solution will typically
decay at infinity like $|\bx|^{-1}$ in the wake and will be a weak
solution for a particular forcing. For $\bu_{\infty}=\bzero$, the
smoothing of the exact solution $\bx^{\perp}|\bx|^{-2}$ will also
be an exact solution decaying like $|\bx|^{-1}$ for a forcing term
of compact support. However, by using the asymptotic behavior proven
by \citet[Theorem 6.1]{Babenko-AsymptoticBehaviorof1970}, we can
deduce some compatibility conditions on $\bff$ such that the existence
of a solution $\tilde{\bu}$ satisfying \eqref{bound-ubar} with $\bu_{\infty}\neq\bzero$
can be deduced. For $\bu_{\infty}=\bzero$, it was conjectured that
some solutions could even decay like $|\bx|^{-1/3}$ \citep[\S 5.4]{Guillod-review2015},
however some compatibility conditions on $\bff$ ensuring the existence
of a solution satisfying \eqref{bound-ubar} with $\bu_{\infty}=\bzero$
are known \citep[\S 3.6]{Guillod-review2015}.

For two-dimensional exterior domains with $\partial\Omega\neq\emptyset$,
we would \emph{a priori} also expect the existence of infinitely many
weak solutions parameterized by some parameter in $\mathbb{R}^{2}$.
However, this question is open and therefore no general weak-strong
uniqueness result comparable to \thmref{uniqueness} is known if $\partial\Omega\neq\emptyset$.
We remark that the method of proof used here for $\Omega=\mathbb{R}^{2}$
does not work if $\partial\Omega\neq\emptyset$, and that it is even
not clear if the mean $\blambda\in\mathbb{R}^{2}$ will furnish a
parametrization in this case.

The asymptotic behavior of the weak solutions in $\Omega=\mathbb{R}^{2}$,
can obviously be determined when our weak-strong theorem is applicable,
but otherwise, we are not able to prove more than the best currently
known results of \citet{Gilbarg.Weinberger-AsymptoticPropertiesof1974,Gilbarg.Weinberger-Asymptoticpropertiesof1978}.
The result of \citet{Amick-Leraysproblemsteady1988} cannot be used
to prove the boundedness of the weak solutions, due to the fact that
the maximum principle used in the proof does not hold on the region
where $\bff$ has support.

For $\Omega=\mathbb{R}^{3}$ and at any fixed force term $\bff$,
we expect the map $\bu_{\infty}\in\mathbb{R}^{3}\mapsto\blambda\in\mathbb{R}^{3}$
to be multivalued since nonuniqueness is expected for large data.
Moreover, it is not clear if this map is surjective. In two dimensions,
we might speculate the existence of a multivalued map $\blambda\in\mathbb{R}^{2}\mapsto\bu_{\infty}\in\mathbb{R}^{2}$
at fixed forcing $\bff$, even if the asymptotic behavior of the weak
solutions is unknown. However, it is not clear if one can find a nontrivial
forcing $\bff$, such that for any $\bu_{\infty}\in\mathbb{R}^{2}$
a weak solution $\tilde{\bu}$ satisfying the hypotheses of \thmref{uniqueness}
can be proven. Therefore, we can not prove that the mapping $\blambda\in\mathbb{R}^{2}\mapsto\bu_{\infty}\in\mathbb{R}^{2}$
is well-defined even for one nontrivial $\bff$ (when $\bff=\bzero$,
the mapping is trivially the identity). Even if this could be proven,
this is not clear if this well-defined map will be injective or surjective.

\section{Function spaces}

We first start with the following standard generalization of the Poincaré
inequality, see for example \citet[Theorems 1.5 \& 1.9]{Necas-DirectMethods2012}:
\begin{lem}
\label{lem:generalized-poincare}Let $\Omega\subset\mathbb{R}^{2}$
be a bounded Lipschitz domain and $\lambda$ a subset of positive
measure of either $\Omega$ or $\partial\Omega$. Then, there exists
$C>0$ depending on $\Omega$ and $\lambda$ such that
\[
\bigl\Vert\bu\bigr\Vert_{L^{2}(\Omega)}\leq C\left(\bigl\Vert\bnabla\bu\bigr\Vert_{L^{2}(\Omega)}+\left|\fint_{\lambda}\bu\right|\right)\,,
\]
for all $\bu\in\dot{H}^{1}(\Omega)$.\end{lem}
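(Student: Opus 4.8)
The plan is to argue by contradiction, exploiting the compactness of the embedding $H^1(\Omega)\hookrightarrow L^2(\Omega)$ on a bounded Lipschitz domain. As a preliminary I would note that here $\dot{H}^1(\Omega)=H^1(\Omega)$: applying the classical Poincar\'e--Wirtinger inequality (the special case $\lambda=\Omega$, contained in \citet[Theorems 1.5 \& 1.9]{Necas-DirectMethods2012}) to $\bu-\fint_{\Omega}\bu$ shows that $\bu\in L^2(\Omega)$ as soon as $\bnabla\bu\in L^2(\Omega)$, so the left-hand side is finite and it suffices to establish the estimate for $\bu\in H^1(\Omega)$, with the quantity $\bigl\Vert\bnabla\bu\bigr\Vert_{L^2(\Omega)}+\bigl|\fint_{\lambda}\bu\bigr|$ playing the role of a norm.

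Suppose the inequality were false. Then one obtains a sequence $(\bu_k)\subset H^1(\Omega)$ with $\Vert\bu_k\Vert_{L^2(\Omega)}=1$ while $\Vert\bnabla\bu_k\Vert_{L^2(\Omega)}+\bigl|\fint_{\lambda}\bu_k\bigr|\to0$. Such a sequence is bounded in $H^1(\Omega)$, so by Rellich--Kondrachov a subsequence (not relabeled) converges strongly in $L^2(\Omega)$ to some $\bu$; the limit inherits $\Vert\bu\Vert_{L^2(\Omega)}=1$, and since $\bnabla\bu_k\to\bzero$ in $L^2(\Omega)$ it also satisfies $\bnabla\bu=\bzero$. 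As $\Omega$ is connected, $\bu$ equals a constant vector $\bc$, with $\bc\neq\bzero$ because $\Vert\bu\Vert_{L^2(\Omega)}=1$ and $|\Omega|<\infty$. Taken together, $\bu_k\to\bc$ strongly in $H^1(\Omega)$.

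To finish I would check that the averaging functional passes to the limit. If $\lambda\subset\Omega$ has positive two-dimensional measure, then $\bu_k\to\bc$ in $L^1(\lambda)$, hence $\fint_{\lambda}\bu_k\to\fint_{\lambda}\bc=\bc$; if $\lambda\subset\partial\Omega$ has positive one-dimensional measure, the trace map $H^1(\Omega)\to L^2(\partial\Omega)$ is continuous on a Lipschitz domain, so $\left.\bu_k\right|_{\partial\Omega}\to\bc$ in $L^2(\partial\Omega)$ and hence in $L^1(\lambda)$, giving again $\fint_{\lambda}\bu_k\to\bc$. In either case the hypothesis $\fint_{\lambda}\bu_k\to\bzero$ forces $\bc=\bzero$, a contradiction. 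I expect this last step to be the only delicate point: one must use that $\lambda$ carries positive measure in the relevant dimension both to make sense of, and to pass to the limit in, the functional $\bu\mapsto\fint_{\lambda}\bu$, and to ensure it detects a nonzero constant.

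If one prefers to avoid compactness, the same result follows by combining Poincar\'e--Wirtinger with the trace inequality: from $\fint_{\lambda}\bu-\fint_{\Omega}\bu=\fint_{\lambda}\bigl(\bu-\fint_{\Omega}\bu\bigr)$ one gets $\bigl|\fint_{\Omega}\bu\bigr|\leq\bigl|\fint_{\lambda}\bu\bigr|+C\bigl\Vert\bu-\fint_{\Omega}\bu\bigr\Vert_{L^2(\Omega)}$ when $\lambda\subset\Omega$, and with $\Vert\cdot\Vert_{L^2(\partial\Omega)}$ in place of $\Vert\cdot\Vert_{L^2(\Omega)}$ when $\lambda\subset\partial\Omega$; bounding the remainder by $C\Vert\bnabla\bu\Vert_{L^2(\Omega)}$ through Poincar\'e--Wirtinger (and the trace theorem in the boundary case) then yields $\Vert\bu\Vert_{L^2(\Omega)}\leq\bigl\Vert\bu-\fint_{\Omega}\bu\bigr\Vert_{L^2(\Omega)}+|\Omega|^{1/2}\bigl|\fint_{\Omega}\bu\bigr|\leq C\bigl(\Vert\bnabla\bu\Vert_{L^2(\Omega)}+\bigl|\fint_{\lambda}\bu\bigr|\bigr)$.
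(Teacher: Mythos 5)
Your proof is correct and follows essentially the same route as the paper: a contradiction argument using a normalized sequence, the compact embedding $H^{1}(\Omega)\hookrightarrow L^{2}(\Omega)$, and the identification of the limit as a constant that the averaging functional forces to vanish. You supply slightly more detail than the paper on the convergence of $\fint_{\lambda}\bu_{k}$ in the boundary case (via the trace theorem) and sketch a compactness-free alternative, but the core argument is the same.
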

\begin{proof}
First we note that if $\bu\in\dot{H}^{1}(\Omega)$, then by the standard
Poincaré inequality $\bu\in H^{1}(\Omega)$, so $\bu\in L^{1}(\lambda)$
and the mean over $\lambda$ is well-defined. We use a proof by contradiction.
If the inequality is false, we can find a sequence $\left(\bu_{n}\right)_{n\in\mathbb{N}}\in H^{1}(\Omega)$
such that $\Vert\bu_{n}\Vert_{L^{2}(\Omega)}=1$ and
\[
\bigl\Vert\bnabla\bu_{n}\bigr\Vert_{L^{2}(\Omega)}+\left|\fint_{\lambda}\bu_{n}\right|<\frac{1}{n}\,.
\]
Since $H^{1}(\Omega)$ is compactly embedded in $L^{2}(\Omega)$,
we can find a subsequence also denoted by $\left(\bu_{n}\right)_{n\in\mathbb{N}}$
and $\bu\in H^{1}(\Omega)$ such that $\bu_{n}\rightharpoonup\bu$
weakly in $H^{1}(\Omega)$ and $\bu_{n}\to\bu$ strongly in $L^{2}(\Omega)$.
Therefore,
\[
\bigl\Vert\bnabla\bu\bigr\Vert_{L^{2}(\Omega)}\leq\liminf_{n\to\infty}\bigl\Vert\bnabla\bu_{n}\bigr\Vert_{L^{2}(\Omega)}=0\,,
\]
so $\bu_{n}\to\bu$ strongly in $H^{1}(\Omega)$ and $\bu$ is a constant.
We can show that
\[
\fint_{\lambda}\bu=\lim_{n\to\infty}\fint_{\lambda}\bu_{n}=\bzero\,,
\]
and since $\lambda$ has positive measure and $\Omega$ is connected,
we obtain $\bu=\bzero$, in contradiction to $\left\Vert \bu\right\Vert _{L^{2}(\Omega)}=1$.
\end{proof}
\pagebreak{}In a second step, we determine a generalized Hardy inequality:
\begin{lem}
\label{lem:generalized-hardy}Let $\Omega\subset\mathbb{R}^{2}$ be
an exterior domain having a compact connected Lipschitz boundary (in
particular $\Omega=\mathbb{R}^{2}$ is allowed), and let $\lambda$
denote a bounded subset of positive measure of either $\Omega$ or
$\partial\Omega$. There exists a constant $C>0$ depending only on
$\Omega$ and $\lambda$ such that
\[
\bigl\Vert\bu\weight\bigr\Vert_{L^{2}(\Omega)}\leq C\left(\bigl\Vert\bnabla\bu\bigr\Vert_{L^{2}(\Omega)}+\fint_{\lambda}\bu\right)\,,
\]
for all $\bu\in\dot{H}^{1}(\Omega)$, where
\begin{align*}
\weight(\bx) & =\frac{1}{\langle\bx\rangle\langle\log\langle\bx\rangle\rangle}\,, & \langle\bx\rangle & =1+|\bx|\,.
\end{align*}
\end{lem}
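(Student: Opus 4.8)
The plan is to localize. On a large fixed ball the weight $\weight$ is bounded, so there the inequality is an immediate consequence of the generalized Poincar\'e inequality of \lemref{generalized-poincare}; on the complementary exterior region the decay encoded in $\weight$ must be extracted from the Dirichlet energy via a logarithmic Hardy inequality. Fix $R\geq 3$, large enough and depending only on $\Omega$ and $\lambda$, so that $\widebar B\cup\lambda\subset B_{R-1}$ (if $\Omega=\mathbb{R}^{2}$ there is no $\widebar B$); in particular $\mathbb{R}^{2}\setminus B_{R-1}\subset\Omega$. As in the proof of \lemref{generalized-poincare}, every $\bu\in\dot{H}^{1}(\Omega)$ belongs to $H^{1}$ on each bounded Lipschitz subdomain, hence has traces on $\lambda$ and on the circles $\partial B_{r}$, and all the manipulations below are justified (if one prefers, one first argues for $\bu$ smooth and then passes to the limit). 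Since $0<\weight\leq 1$ and $\Omega\cap B_{R}$ is a bounded Lipschitz domain that contains $\lambda$ or carries $\lambda$ on its inner boundary component, \lemref{generalized-poincare} gives
\[
\bigl\Vert\bu\weight\bigr\Vert_{L^{2}(\Omega\cap B_{R})}\leq\bigl\Vert\bu\bigr\Vert_{L^{2}(\Omega\cap B_{R})}\leq C\Bigl(\bigl\Vert\bnabla\bu\bigr\Vert_{L^{2}(\Omega)}+\bigl|\textstyle\fint_{\lambda}\bu\bigr|\Bigr)\,.
\]

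For the exterior region I would use polar coordinates $\bx=(r\cos\theta,r\sin\theta)$. For almost every $\theta$ the map $r\mapsto\bu(r,\theta)$ is absolutely continuous on $[R,\infty)$, and the substitution $t=\log r$, $\tilde{\bu}(t,\theta)=\bu(\e^{t},\theta)$, turns the radial estimate into the classical Hardy inequality on the half-line $(\log R,\infty)$: applying it to $h(t)=\tilde{\bu}(t,\theta)-\tilde{\bu}(\log R,\theta)$, which vanishes at $t=\log R>0$, and using $\int_{\log R}^{\infty}t^{-2}\,\rd t=(\log R)^{-1}$ together with $\int_{\log R}^{\infty}|\partial_{t}\tilde{\bu}|^{2}\,\rd t=\int_{R}^{\infty}r\,|\partial_{r}\bu|^{2}\,\rd r$, one gets
\[
\int_{R}^{\infty}\frac{|\bu(r,\theta)|^{2}}{r(\log r)^{2}}\,\rd r=\int_{\log R}^{\infty}\frac{|\tilde{\bu}(t,\theta)|^{2}}{t^{2}}\,\rd t\leq C\Bigl(|\bu(R,\theta)|^{2}+\int_{R}^{\infty}r\,|\partial_{r}\bu(r,\theta)|^{2}\,\rd r\Bigr)\,,
\]
with $C$ depending only on $R$. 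Since $r\,\weight(\bx)^{2}\leq r^{-1}(\log r)^{-2}$ for $|\bx|=r\geq R$, and $\int_{R}^{\infty}r\,|\partial_{r}\bu|^{2}\,\rd r\leq\int_{R}^{\infty}\bigl(|\partial_{r}\bu|^{2}+r^{-2}|\partial_{\theta}\bu|^{2}\bigr)r\,\rd r$, integrating the last display over $\theta\in(0,2\pi)$ and using $\int_{0}^{2\pi}|\bu(R,\theta)|^{2}\,\rd\theta=R^{-1}\Vert\bu\Vert_{L^{2}(\partial B_{R})}^{2}$ yields
\[
\bigl\Vert\bu\weight\bigr\Vert_{L^{2}(\mathbb{R}^{2}\setminus B_{R})}^{2}\leq C\Bigl(\bigl\Vert\bu\bigr\Vert_{L^{2}(\partial B_{R})}^{2}+\bigl\Vert\bnabla\bu\bigr\Vert_{L^{2}(\Omega)}^{2}\Bigr)\,.
\]

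It remains to absorb the circle term. By the trace theorem on the annulus $A=B_{R+1}\setminus B_{R-1}\subset\Omega$ we have $\Vert\bu\Vert_{L^{2}(\partial B_{R})}\leq C\Vert\bu\Vert_{H^{1}(A)}\leq C\Vert\bu\Vert_{H^{1}(\Omega\cap B_{R+1})}$, and since $\Omega\cap B_{R+1}$ is again a bounded Lipschitz domain meeting $\lambda$ (in its interior or on its inner boundary), \lemref{generalized-poincare} bounds the right-hand side by $C\bigl(\Vert\bnabla\bu\Vert_{L^{2}(\Omega)}+|\fint_{\lambda}\bu|\bigr)$; every constant here is geometric, i.e.\ depends only on the fixed radius $R$ and hence only on $\Omega$ and $\lambda$. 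Adding the estimates on $\Omega\cap B_{R}$ and on $\mathbb{R}^{2}\setminus B_{R}$ and taking square roots then proves the lemma.

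The one genuinely delicate ingredient is the logarithmic Hardy inequality on the exterior region. The naive weight $\langle\bx\rangle^{-1}$ cannot be controlled by the Dirichlet energy in the plane — already a nonzero constant vector field exhibits the failure, which is precisely the two-dimensional obstruction discussed in the introduction — so the extra factor $\langle\log\langle\bx\rangle\rangle^{-1}$ is essential, and one must carefully carry along the boundary contribution on $\partial B_{R}$ that the half-line Hardy inequality produces. Everything else is a routine combination of \lemref{generalized-poincare} with the trace theorem on annuli, and the argument applies verbatim when $\Omega=\mathbb{R}^{2}$, the set $\widebar B$ and the inner boundary component being then simply absent.
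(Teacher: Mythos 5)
Your proof is correct and follows essentially the same route as the paper's: a splitting of $\Omega$ into a bounded region controlled by \lemref{generalized-poincare} and an exterior region controlled by a logarithmic Hardy inequality. The only difference is technical — the paper subtracts the near-field with a smooth cutoff so as to invoke the Hardy inequality for functions vanishing on $\partial B_{R}$ (Galdi, Theorem II.6.1), whereas you derive the one-dimensional Hardy inequality by hand in polar coordinates and absorb the resulting circle term via the trace theorem and \lemref{generalized-poincare}; both devices serve the same purpose.
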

\begin{proof}
Let $R>0$ be such that $\mathbb{R}^{2}\setminus\Omega\subset B_{R}$
and $\lambda\subset B_{R}$. In this proof $C$ denotes a positive
constant depending only on $\lambda$ and $R$, but which might change
from line to line. Let $\chi$ be a smooth radial cutoff function
such that $\chi(\bx)=1$ if $\bx\in B_{R}$ and $\chi(\bx)=0$ if
$\bx\notin B_{2R}$. We consider the splitting $\bu=\bu_{1}+\bu_{2}$,
where $\bu_{1}=\chi\bu$ and $\bu_{2}=(1-\chi)\bu$. By using the
generalized Poincaré inequality of \lemref{generalized-poincare},
we first remark that
\[
\bigl\Vert\bu\bigr\Vert_{L^{2}(B_{2R})}\leq C\left(\bigl\Vert\bnabla\bu\bigr\Vert_{L^{2}(B_{2R})}+\left|\fint_{\lambda}\bu\right|\right)\,.
\]
For the first part, we have
\[
\bigl\Vert\bu_{1}\weight\bigr\Vert_{L^{2}(\Omega)}=\bigl\Vert\chi\bu\weight\bigr\Vert_{L^{2}(B_{2R})}\leq\bigl\Vert\chi\weight\bigr\Vert_{L^{\infty}(B_{2R})}\bigl\Vert\bu\bigr\Vert_{L^{2}(B_{2R})}\leq C\left(\bigl\Vert\bnabla\bu\bigr\Vert_{L^{2}(B_{2R})}+\left|\fint_{\lambda}\bu\right|\right)\,.
\]
For the second part, we first recall the following standard Hardy
inequality, 
\[
\left\Vert \frac{\bu}{\left|\bx\right|\log(R^{-1}\left|\bx\right|)}\right\Vert _{L^{2}(\Omega\setminus B_{R})}\leq\frac{2}{R}\bigl\Vert\bnabla\bu\bigr\Vert_{L^{2}(\Omega\setminus B_{R})}\,,
\]
valid for all $\bu\in H^{1}(\Omega\setminus B_{R})$ having vanishing
trace of $\partial B_{R}$, see for example \citet[Theorem II.6.1]{Galdi-IntroductiontoMathematical2011}.
Since there exists $C>0$ such that
\[
\weight(\bx)=\frac{1}{\langle\bx\rangle\langle\log\langle\bx\rangle\rangle}\leq\frac{C}{\left|\bx\right|\log(R^{-1}\left|\bx\right|)}\,,
\]
for $\left|\bx\right|>R$, we obtain 
\[
\bigl\Vert\bu_{2}\weight\bigr\Vert_{L^{2}(\Omega)}=\bigl\Vert\bu_{2}\weight\bigr\Vert_{L^{2}(\Omega\setminus B_{R})}\leq C\bigl\Vert\bnabla\bu_{2}\bigr\Vert_{L^{2}(\Omega\setminus B_{R})}\,.
\]
Since $\bnabla\bu_{2}=(1-\chi)\bnabla\bu-\bnabla\chi\otimes\bu$,
we have 
\begin{align*}
\bigl\Vert\bnabla\bu_{2}\bigr\Vert_{L^{2}(\Omega\setminus B_{R})} & \leq\bigl\Vert(1-\chi)\bnabla\bu\bigr\Vert_{L^{2}(\Omega\setminus B_{R})}+\bigl\Vert\bnabla\chi\otimes\bu\bigr\Vert_{L^{2}(B_{2R})}\\
 & \leq\bigl\Vert1-\chi\bigr\Vert_{L^{\infty}(\Omega\setminus B_{R})}\bigl\Vert\bnabla\bu\bigr\Vert_{L^{2}(\Omega\setminus B_{R})}+\bigl\Vert\bnabla\chi\bigr\Vert_{L^{\infty}(B_{2R})}\bigl\Vert\bu\bigr\Vert_{L^{2}(B_{2R})}\\
 & \leq C\bigl\Vert\bnabla\bu\bigr\Vert_{L^{2}(\Omega\setminus B_{R})}+C\left(\bigl\Vert\bnabla\bu\bigr\Vert_{L^{2}(B_{2R})}+\left|\fint_{\lambda}\bu\right|\right)\,.
\end{align*}
Therefore, putting all the bounds together, we have
\[
\bigl\Vert\bu\weight\bigr\Vert_{L^{2}(\Omega)}\leq\bigl\Vert\bu_{1}\weight\bigr\Vert_{L^{2}(\Omega)}+\bigl\Vert\bu_{2}\weight\bigr\Vert_{L^{2}(\Omega)}\leq C\left(\bigl\Vert\bnabla\bu\bigr\Vert_{L^{2}(\Omega)}+\left|\fint_{\lambda}\bu\right|\right)\,,
\]
and the lemma is proven.
\end{proof}
In view of the result of \lemref{generalized-poincare,generalized-hardy}
with $\lambda=\partial\Omega$, we see that the semi-norm of $\dot{H}^{1}(\Omega)$
defines a norm on $C_{0}^{\infty}(\Omega)$ if $\partial\Omega\neq\emptyset$.
Therefore, we have the following standard result, see for example
\citet{Galdi-IntroductiontoMathematical2011} or \citet{Sohr-Navier-Stokesequations.elementary2001}:
\begin{prop}
\label{prop:completion-gamma}Let $\Omega\subset\mathbb{R}^{2}$ be
an exterior domain having a compact connected Lipschitz boundary $\partial\Omega\neq\emptyset$.
Then the completion of $C_{0,\sigma}^{\infty}(\Omega)$ in the norm
of $\dot{H}^{1}(\Omega)$ is the Hilbert space
\[
\dot{H}_{0,\sigma}^{1}(\Omega)=\left\{ \bu\in\dot{H}_{\sigma}^{1}(\Omega):\,\Gamma_{\partial\Omega}\bu=\bzero\right\} \,,
\]
with the inner product 
\[
\bigl\langle\bu,\bv\bigr\rangle_{\dot{H}_{0,\sigma}^{1}(\Omega)}=\bigl\langle\bnabla\bu,\bnabla\bv\bigr\rangle_{L^{2}(\Omega)}\,.
\]
Moreover, $\dot{H}_{0,\sigma}^{1}(\Omega)$ has the following equivalent
norms,
\[
\bigl\Vert\bu\bigr\Vert_{L^{2}(\Omega\cap B_{R})}+\bigl\Vert\bnabla\bu\bigr\Vert_{L^{2}(\Omega)}\,,
\]
for any $R>0$ such $\partial\Omega\cap B_{R}\neq\emptyset$, and
\[
\bigl\Vert\bu\weight\bigr\Vert_{L^{2}(\Omega)}+\bigl\Vert\bnabla\bu\bigr\Vert_{L^{2}(\Omega)}\,.
\]
\end{prop}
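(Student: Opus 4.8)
The plan is to deduce the two norm equivalences from \lemref{generalized-poincare} and \lemref{generalized-hardy}, and then to use them in order to identify the abstract completion with the concrete space $H:=\{\bu\in\dot{H}_{\sigma}^{1}(\Omega):\Gamma_{\partial\Omega}\bu=\bzero\}$. Fix $R>0$ large enough that $\mathbb{R}^{2}\setminus\Omega\subset B_{R}$. First I would record that \lemref{generalized-poincare}, applied on the bounded Lipschitz domain $\Omega\cap B_{R}$ with $\lambda=\partial\Omega$, gives $\Vert\bu\Vert_{L^{2}(\Omega\cap B_{R})}\leq C(\Vert\bnabla\bu\Vert_{L^{2}(\Omega)}+|\fint_{\partial\Omega}\bu|)$ for every $\bu\in\dot{H}^{1}(\Omega)$, while \lemref{generalized-hardy} with $\lambda=\partial\Omega$ gives $\Vert\bu\weight\Vert_{L^{2}(\Omega)}\leq C(\Vert\bnabla\bu\Vert_{L^{2}(\Omega)}+|\fint_{\partial\Omega}\bu|)$. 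For $\bphi\in C_{0,\sigma}^{\infty}(\Omega)$ the boundary mean $\fint_{\partial\Omega}\bphi$ vanishes, so both right-hand sides reduce to $C\Vert\bnabla\bphi\Vert_{L^{2}(\Omega)}$; hence the $\dot{H}^{1}$-seminorm is a genuine norm on $C_{0,\sigma}^{\infty}(\Omega)$, equivalent there to each of the two norms in the statement (the reverse inequalities being trivial). Since all these norms are equivalent on $C_{0,\sigma}^{\infty}(\Omega)$ they produce the same completion, which is therefore a Hilbert space for the inner product $\langle\bnabla\bu,\bnabla\bv\rangle_{L^{2}(\Omega)}$; and since $\weight$ is bounded below on every compact set, a sequence $(\bphi_{k})\subset C_{0,\sigma}^{\infty}(\Omega)$ that is Cauchy in the seminorm converges in $L_{\loc}^{2}(\Omega)$ to a limit $\bu$ with $\bnabla\bu\in L^{2}(\Omega)$, $\bu\weight\in L^{2}(\Omega)$ and $\bnabla\bcdot\bu=0$, so $\dot{H}_{0,\sigma}^{1}(\Omega)$ is realised as a genuine space of functions and the equivalences pass to it.

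For the inclusion $\dot{H}_{0,\sigma}^{1}(\Omega)\subseteq H$, take $\bu$ as above with $\bphi_{k}\to\bu$. The generalized Poincaré inequality shows $\bphi_{k}\to\bu$ in $H^{1}(\Omega\cap B_{R})$, and since the trace operator $H^{1}(\Omega\cap B_{R})\to L^{2}(\partial\Omega)$ is continuous we get $\Gamma_{\partial\Omega}\bu=\lim_{k}\Gamma_{\partial\Omega}\bphi_{k}=\bzero$, so $\bu\in H$. For the reverse inclusion $H\subseteq\dot{H}_{0,\sigma}^{1}(\Omega)$, given $\bu\in H$ I would first truncate at infinity. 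Let $\chi_{R}$ be a radial cutoff equal to $1$ on $B_{R}$, vanishing outside $B_{R^{2}}$, with $|\bnabla\chi_{R}|\leq C(|\bx|\log R)^{-1}$ on the annulus $A_{R}=B_{R^{2}}\setminus\widebar B_{R}$. Because $\Gamma_{\partial\Omega}\bu=\bzero$ and $\bnabla\bcdot\bu=0$, the flux of $\bu$ through every circle contained in $\Omega$ vanishes; hence $\int_{A_{R}}\bnabla\chi_{R}\bcdot\bu=0$, and the same holds over each dyadic sub-annulus $\{2^{j}R<|\bx|<2^{j+1}R\}$. One can therefore solve $\bnabla\bcdot\bw_{R}=-\bnabla\chi_{R}\bcdot\bu$ with $\bw_{R}\in H_{0}^{1}(A_{R})$ by applying the Bogovskii operator on each dyadic piece (whose aspect ratio is bounded) and summing, obtaining $\Vert\bnabla\bw_{R}\Vert_{L^{2}(A_{R})}\leq C\Vert\bnabla\chi_{R}\bcdot\bu\Vert_{L^{2}(A_{R})}$ with $C$ independent of $R$. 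Then $\bu_{R}:=\chi_{R}\bu+\bw_{R}\in H$ has bounded support, and using $\log|\bx|\leq2\log R$ on $A_{R}$ together with \lemref{generalized-hardy} one gets $\Vert\bnabla\chi_{R}\bcdot\bu\Vert_{L^{2}(A_{R})}\leq C\Vert\bu\weight\Vert_{L^{2}(\Omega\setminus B_{R})}\to0$, whence $\Vert\bnabla(\bu-\bu_{R})\Vert_{L^{2}(\Omega)}\to0$.

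Finally $\bu_{R}$ is a weakly divergence-free $H^{1}$ field on the bounded Lipschitz domain $\Omega\cap B_{R^{2}}$ with vanishing trace on its whole boundary $\partial\Omega\cup\partial B_{R^{2}}$, hence it belongs to the closure of $C_{0,\sigma}^{\infty}(\Omega\cap B_{R^{2}})$ there by the classical density theorem on bounded domains (see, e.g., \citet[Chapter III]{Galdi-IntroductiontoMathematical2011} or \citet{Sohr-Navier-Stokesequations.elementary2001}); extending such approximants by zero and combining with the truncation step by a diagonal argument shows $\bu\in\dot{H}_{0,\sigma}^{1}(\Omega)$. This proves $H=\dot{H}_{0,\sigma}^{1}(\Omega)$, and the norm equivalences on it were established in the first step.

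I expect the truncation-at-infinity step to be the main obstacle: the tail of $\bu$ must be removed while keeping the field solenoidal, and the naive cutoff $\chi_{R}\bu$ is not divergence-free. The device that resolves it is the logarithmic cutoff, whose gradient is small precisely in the weighted norm $\Vert\,\cdot\,\weight\Vert_{L^{2}}$ controlled by \lemref{generalized-hardy}; the accompanying Bogovskii correction must then be assembled from dyadic annuli, using that the relevant integral vanishes on each of them, so that its $\dot{H}^{1}$-seminorm stays comparable to that small quantity uniformly in $R$.
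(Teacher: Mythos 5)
Your argument is correct, but it takes a more self-contained route than the paper does. For this proposition the paper's proof is essentially a citation: the identification of the completion of $C_{0,\sigma}^{\infty}(\Omega)$ with $\{\bu\in\dot{H}_{\sigma}^{1}(\Omega):\Gamma_{\partial\Omega}\bu=\bzero\}$ is delegated to \citet[Theorems II.7.3 \& III.5.1]{Galdi-IntroductiontoMathematical2011} or \citet[Lemma III.1.2.1]{Sohr-Navier-Stokesequations.elementary2001}, and only the two norm equivalences are derived, exactly as you do, from \lemref{generalized-poincare} (with $\lambda=\partial\Omega\cap B_{R}$) and \lemref{generalized-hardy}. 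You instead reconstruct the density statement from scratch: logarithmic cutoff, a divergence correction on the transition annulus, and the classical density theorem on the bounded Lipschitz domain $\Omega\cap B_{R^{2}}$. This is essentially the argument behind the cited theorems, and it is also the scheme the paper itself spells out for the harder $\omega$-anchored variant in \propref{completion-omega}; the differences are that you use a single-logarithmic cutoff whose gradient is merely comparable to $\weight$ on the annulus (smallness then coming from the vanishing tail $\Vert\bu\weight\Vert_{L^{2}(\Omega\setminus B_{R})}$), whereas the paper's \propref{completion-omega} uses an iterated-logarithm cutoff to gain an explicit small prefactor, and that you assemble the Bogovskii correction dyadically where the paper invokes \citet[Theorem III.3.1]{Galdi-IntroductiontoMathematical2011} directly on the annulus. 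You also correctly identified the one point where the hypothesis $\partial\Omega\neq\emptyset$ together with $\Gamma_{\partial\Omega}\bu=\bzero$ enters the truncation step, namely the vanishing of the flux through every circle, which is what makes the compatibility condition for the divergence equation hold on each dyadic piece. The extra work buys you independence from the external references at the cost of length; mathematically the two proofs rest on the same ingredients.
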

\begin{proof}
The proof that the completion of $C_{0,\sigma}^{\infty}(\Omega)$
in the norm of $\dot{H}^{1}(\Omega)$ is equal to $\dot{H}_{0,\sigma}^{1}(\Omega)$
is given in \citet[Theorems II.7.3 \& III.5.1]{Galdi-IntroductiontoMathematical2011}
or in \citet[Lemma III.1.2.1]{Sohr-Navier-Stokesequations.elementary2001}.
The equivalence of the norms follows from the generalized Poincaré
inequality of \lemref{generalized-poincare} with $\lambda=\partial\Omega\cap B_{R}$
and from \lemref{generalized-hardy}.
\end{proof}
When the boundary is trivial, \emph{i.e.} $\Omega=\mathbb{R}^{2}$,
the boundary cannot be used as an anchor point for the Poincaré inequality
and in particular the semi-norm of $\dot{H}^{1}(\Omega)$ does not
define a norm on $C_{0}^{\infty}(\Omega)$. The idea is to fix some
bounded subset $\omega\subset\Omega$ of positive measure so that
$\dot{H}^{1}(\Omega)$ is an Hilbert space with the inner product
\[
\bigl\langle\bnabla\bu,\bnabla\bv\bigr\rangle_{L^{2}(\Omega)}+\fint_{\omega}\bu\bcdot\fint_{\omega}\bv\,.
\]
Therefore, the following result stays also valid for $\Omega=\mathbb{R}^{2}$
and will play a crucial role in the construction of weak solutions
in $\Omega=\mathbb{R}^{2}$:
\begin{prop}
\label{prop:completion-omega}Let $\Omega\subset\mathbb{R}^{2}$ be
an exterior domain having a compact connected Lipschitz boundary (in
particular $\Omega=\mathbb{R}^{2}$ is allowed). Given a bounded subset
$\omega\subset\Omega$ of positive measure, the completion of
\[
C_{0,\sigma}^{\infty}(\Omega,\omega)=\left\{ \bphi\in C_{0,\sigma}^{\infty}(\Omega):\,\fint_{\omega}\bphi=\bzero\right\} ,
\]
in the norm of $\dot{H}^{1}(\Omega)$ is the Hilbert space
\[
\dot{H}_{0,\sigma}^{1}(\Omega,\omega)=\left\{ \bu\in\dot{H}_{\sigma}^{1}(\Omega):\,\Gamma_{\partial\Omega}\bu=\bzero\quad\text{and}\quad\fint_{\omega}\bu=\bzero\right\} \,,
\]
with the inner product 
\[
\bigl\langle\bu,\bv\bigr\rangle_{\dot{H}_{0,\sigma}^{1}(\Omega,\omega)}=\bigl\langle\bnabla\bu,\bnabla\bv\bigr\rangle_{L^{2}(\Omega)}\,.
\]
Moreover, $\dot{H}_{0,\sigma}^{1}(\Omega,\omega)$ has the following
equivalent norms,
\[
\bigl\Vert\bu\bigr\Vert_{L^{2}(\Omega\cap B_{R})}+\bigl\Vert\bnabla\bu\bigr\Vert_{L^{2}(\Omega)}\,,
\]
for any $R>0$ such that $\omega\subset B_{R}$, and
\[
\bigl\Vert\bu\weight\bigr\Vert_{L^{2}(\Omega)}+\bigl\Vert\bnabla\bu\bigr\Vert_{L^{2}(\Omega)}\,.
\]
\end{prop}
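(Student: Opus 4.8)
The plan is to reduce the statement to the density of $C_{0,\sigma}^{\infty}(\Omega,\omega)$ in $\dot{H}_{0,\sigma}^{1}(\Omega,\omega)$ for the $\dot{H}^{1}(\Omega)$-seminorm and then to prove that density, treating $\partial\Omega\neq\emptyset$ and $\Omega=\mathbb{R}^{2}$ separately. For the reduction, apply \lemref{generalized-hardy} with $\lambda=\omega$: on $C_{0,\sigma}^{\infty}(\Omega,\omega)$ the mean term vanishes, giving $\Vert\bphi\weight\Vert_{L^{2}(\Omega)}\leq C\Vert\bnabla\bphi\Vert_{L^{2}(\Omega)}$, and together with \lemref{generalized-poincare} this makes the three quantities $\Vert\bnabla\bphi\Vert_{L^{2}(\Omega)}$, $\Vert\bphi\weight\Vert_{L^{2}(\Omega)}+\Vert\bnabla\bphi\Vert_{L^{2}(\Omega)}$ and $\Vert\bphi\Vert_{L^{2}(\Omega\cap B_{R})}+\Vert\bnabla\bphi\Vert_{L^{2}(\Omega)}$ (for any $R$ with $\omega\subset B_{R}$) pairwise equivalent norms on that space. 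Hence the seminorm is a genuine norm there, and the completion $X$ is, up to equivalence of norms, the closure of $C_{0,\sigma}^{\infty}(\Omega,\omega)$ in the complete space $\mathcal{Z}=\{\bu\in L_{\loc}^{1}(\Omega):\bnabla\bu\in L^{2}(\Omega),\ \bu\weight\in L^{2}(\Omega)\}$. Since $\weight$ is bounded below on each bounded set, convergence in $\mathcal{Z}$ implies convergence in $H^{1}$ on bounded subsets of $\Omega$; consequently the conditions of being weakly divergence-free, of having vanishing trace on $\partial\Omega$, and of having zero mean on $\omega$ all pass to $\mathcal{Z}$-limits, so $X\subseteq\dot{H}_{0,\sigma}^{1}(\Omega,\omega)$, while \lemref{generalized-hardy} shows conversely that every element of $\dot{H}_{0,\sigma}^{1}(\Omega,\omega)$ lies in $\mathcal{Z}$. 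It remains to approximate a given $\bu\in\dot{H}_{0,\sigma}^{1}(\Omega,\omega)$ in $\Vert\bnabla\cdot\Vert_{L^{2}(\Omega)}$ by elements of $C_{0,\sigma}^{\infty}(\Omega,\omega)$; the corresponding weighted convergence is then automatic, because \lemref{generalized-hardy} applied to the differences (which have zero mean on $\omega$) controls it by the gradient.

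If $\partial\Omega\neq\emptyset$, then $\bu\in\dot{H}_{0,\sigma}^{1}(\Omega)$, so by \propref{completion-gamma} there are $\bphi_{n}\in C_{0,\sigma}^{\infty}(\Omega)$ with $\bnabla\bphi_{n}\to\bnabla\bu$ in $L^{2}(\Omega)$. Moreover $\bw\mapsto\fint_{\omega}\bw$ is continuous on $\dot{H}_{0,\sigma}^{1}(\Omega)$ (bound it by $\Vert\bw\Vert_{L^{2}(\Omega\cap B_{R})}$ and use the equivalent norm from \propref{completion-gamma}), so $\fint_{\omega}\bphi_{n}\to\fint_{\omega}\bu=\bzero$. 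Choose once and for all $\bpsi_{1},\bpsi_{2}\in C_{0,\sigma}^{\infty}(\Omega)$ with $\fint_{\omega}\bpsi_{1}$ and $\fint_{\omega}\bpsi_{2}$ linearly independent in $\mathbb{R}^{2}$ (such fields exist, since $\bphi\mapsto\fint_{\omega}\bphi$ maps $C_{0,\sigma}^{\infty}(\Omega)$ onto $\mathbb{R}^{2}$); writing $\fint_{\omega}\bphi_{n}=a_{n}^{1}\fint_{\omega}\bpsi_{1}+a_{n}^{2}\fint_{\omega}\bpsi_{2}$ with $a_{n}^{i}\to0$, the fields $\bphi_{n}-a_{n}^{1}\bpsi_{1}-a_{n}^{2}\bpsi_{2}$ lie in $C_{0,\sigma}^{\infty}(\Omega,\omega)$ and still converge to $\bu$ in $\Vert\bnabla\cdot\Vert_{L^{2}(\Omega)}$.

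If $\Omega=\mathbb{R}^{2}$, \propref{completion-gamma} is not available, and a crude truncation $\chi(\cdot/R)\,\bu$ loses a factor $\log R$ in the $L^{2}$-norm of the gradient; this is cured by truncating the stream function instead. Write $\bu=\bnabla^{\perp}\eta$ (possible since $\bnabla\bcdot\bu=0$ on the simply connected $\mathbb{R}^{2}$); then $\bnabla^{2}\eta\in L^{2}(\mathbb{R}^{2})$. Mollify so that $\eta$ is smooth (this perturbs $\bnabla\bu$ by an $L^{2}$-small amount, which a diagonal argument absorbs), take a radial $\chi$ with $\chi=1$ on $B_{1}$ and $\operatorname{supp}\chi\subset B_{2}$, set $\chi_{R}=\chi(\cdot/R)$, and let $\ell_{R}$ be the affine function matching the averages of $\eta$ and of $\bnabla\eta$ over the dyadic annulus $A_{R}=\{R<|\bx|<2R\}$. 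Then $\bphi_{R}:=\bnabla^{\perp}\bigl(\chi_{R}(\eta-\ell_{R})\bigr)\in C_{0,\sigma}^{\infty}(\mathbb{R}^{2})$, and since $\bnabla^{2}\bigl(\chi_{R}(\eta-\ell_{R})-\eta\bigr)$ is the sum of $(\chi_{R}-1)\bnabla^{2}\eta$, a term bounded by $C|\bnabla\chi_{R}|\,|\bnabla\eta-\bnabla\ell_{R}|$ and a term bounded by $C|\bnabla^{2}\chi_{R}|\,|\eta-\ell_{R}|$, the scale-invariant first- and second-order Poincar\'e inequalities on $A_{R}$, namely $\Vert\bnabla\eta-\bnabla\ell_{R}\Vert_{L^{2}(A_{R})}\leq CR\Vert\bnabla^{2}\eta\Vert_{L^{2}(A_{R})}$ and $\Vert\eta-\ell_{R}\Vert_{L^{2}(A_{R})}\leq CR^{2}\Vert\bnabla^{2}\eta\Vert_{L^{2}(A_{R})}$ with $C$ independent of $R$, give $\Vert\bnabla\bphi_{R}-\bnabla\bu\Vert_{L^{2}(\mathbb{R}^{2})}\leq C\Vert\bnabla^{2}\eta\Vert_{L^{2}(\{|\bx|>R\})}\to0$, with no logarithmic loss. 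For $R$ large we have $\chi_{R}=1$ on $\omega$, so $\bphi_{R}=\bu-\bc_{R}$ near the origin for the constant $\bc_{R}=\bnabla^{\perp}\ell_{R}$, hence $\fint_{\omega}\bphi_{R}=-\bc_{R}$ (possibly large, as $\dot{H}^{1}$-fields can grow like $\sqrt{\log|\bx|}$). To restore the zero mean we use a logarithmic cutoff: fix $R_{0}$ with $\omega\subset B_{R_{0}}$ and a radial $\vartheta_{\rho}$ with $\vartheta_{\rho}=1$ on $B_{R_{0}}$, $\operatorname{supp}\vartheta_{\rho}\subset B_{\rho}$ and $|\bnabla\vartheta_{\rho}|\lesssim(|\bx|\log\rho)^{-1}$, so that $\Vert\bnabla\vartheta_{\rho}\Vert_{L^{2}(\mathbb{R}^{2})}\to0$ as $\rho\to\infty$; then $\bnabla^{\perp}(\psi_{\bc_{R}}\vartheta_{\rho})$, where $\psi_{\bc}$ is the homogeneous linear function with $\bnabla^{\perp}\psi_{\bc}\equiv\bc$, lies in $C_{0,\sigma}^{\infty}(\mathbb{R}^{2})$, equals $\bc_{R}$ on $B_{R_{0}}$, and has gradient of $L^{2}$-norm $\lesssim|\bc_{R}|/\sqrt{\log\rho}$. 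Choosing $\rho=\rho(R)$ large enough, the field $\bphi_{R}+\bnabla^{\perp}(\psi_{\bc_{R}}\vartheta_{\rho(R)})$ belongs to $C_{0,\sigma}^{\infty}(\mathbb{R}^{2},\omega)$ and tends to $\bu$ in $\Vert\bnabla\cdot\Vert_{L^{2}}$ as $R\to\infty$. Together with the reduction this gives $X=\dot{H}_{0,\sigma}^{1}(\Omega,\omega)$; the two stated equivalent norms then follow from \lemref{generalized-hardy} with $\lambda=\omega$, exactly as in the reduction.

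The one delicate point is the case $\Omega=\mathbb{R}^{2}$: the absence of a Poincar\'e inequality forces the two non-standard devices above, namely truncating the stream function so that the cutoff error is carried by $\bnabla^{2}\eta$ rather than by $\bnabla\bu$ (which removes the otherwise fatal $\log R$), and killing the residual constant mean by a logarithmic cutoff, which is precisely the mechanism by which constant fields become approximable in $\dot{H}^{1}(\mathbb{R}^{2})$. The remaining ingredients --- the norm equivalences, the stability of the defining conditions under limits, and the exterior-domain case --- are routine once \lemref{generalized-hardy} and \propref{completion-gamma} are available.
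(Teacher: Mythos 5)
Your proof is correct, but the key density step is argued quite differently from the paper. The paper treats $\partial\Omega\neq\emptyset$ and $\Omega=\mathbb{R}^{2}$ uniformly: it multiplies $\bu$ by a slowly varying iterated\textendash logarithmic cutoff $\psi_{n}$ whose gradient is dominated by $\weight$ up to a factor $1/\log\langle\log\langle n\rangle\rangle$, so that the cutoff error $\Vert\bu\bcdot\bnabla\psi_{n}\Vert_{L^{2}}$ is controlled by $\Vert\bu\weight\Vert_{L^{2}}$ via \lemref{generalized-hardy}; it then restores the divergence-free condition with a Bogovskii-type corrector supported in the transition annulus, smooths, and finally corrects the mean on $\omega$ with fixed fields $\bv_{i}$ of prescribed mean, exactly as you do at the end of your exterior-domain case. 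You instead split the two cases, and for $\Omega=\mathbb{R}^{2}$ you exploit the stream function: truncating $\eta-\ell_{R}$ with an ordinary dyadic cutoff and using scale-invariant Poincar\'e inequalities on the annulus keeps the error controlled by $\Vert\bnabla^{2}\eta\Vert_{L^{2}(\{|\bx|>R\})}$ with no logarithmic loss and, since you take $\bnabla^{\perp}$ of a compactly supported function, no divergence corrector is needed; the residual constant $\bc_{R}$ is then removed by the classical logarithmic cutoff, which is precisely the Deny\textendash Lions mechanism by which constants lie in $\dot{H}_{0}^{1}(\mathbb{R}^{2})$. Your route buys a cleaner, loss-free truncation and avoids the Bogovskii lemma, at the price of being tied to the two-dimensional stream-function structure and of handling the (possibly $\sqrt{\log R}$-large) constant $\bc_{R}$ in a separate step; the paper's route is less sharp at the truncation but treats both geometries and the divergence constraint in one pass. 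Minor points you should tidy but which do not affect correctness: after mollifying $\eta$ the mean $\fint_{\omega}\bphi_{R}$ is only approximately $-\bc_{R}$, so the constant you cancel should simply be defined as $-\fint_{\omega}\bphi_{R}$; and the reduction identifying the abstract completion with a closed subspace of the weighted space $\mathcal{Z}$ is the right way to make the statement precise, matching what the paper leaves implicit.
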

\begin{proof}
Let $\dot{H}_{0,\sigma}^{1}(\Omega,\omega)$ denote the completion
of $C_{0,\sigma}^{\infty}(\Omega,\omega)$ in the norm of $\dot{H}^{1}(\Omega)$.
First of all we remark that $\dot{H}_{0,\sigma}^{1}(\Omega,\omega)\subset\left\{ \bu\in\dot{H}_{0,\sigma}^{1}(\Omega):\,\fint_{\omega}\bu=\bzero\right\} $.
Using the generalized Poincaré and Hardy inequalities (\lemref{generalized-poincare,generalized-hardy}),
we have
\[
\bigl\Vert\bu\bigr\Vert_{L^{2}(\Omega\cap B_{R})}^{2}\leq C\left(\bigl\Vert\bnabla\bu\bigr\Vert_{L^{2}(\Omega\cap B_{R})}^{2}+\left|\fint_{\omega}\bu\right|^{2}\right)\,,
\]
and
\[
\bigl\Vert\bu\weight\bigr\Vert_{L^{2}(\Omega)}\leq C\left(\bigl\Vert\bnabla\bu\bigr\Vert_{L^{2}(\Omega)}+\fint_{\omega}\bu\right)\,,
\]
for any $\bu\in\dot{H}^{1}(\Omega)$, which show the claimed equivalence
of the norms. Therefore, it only remains to prove that any $\bu\in\dot{H}_{0,\sigma}^{1}(\Omega,\omega)$
can be approximated by functions in $C_{0,\sigma}^{\infty}(\Omega,\omega)$.
The proof of this fact follows almost directly by using the proofs
presented in Chapters II \& III of \citet{Galdi-IntroductiontoMathematical2011},
so we only sketch the main steps.

Let $\psi:\mathbb{R}^{+}\to[0,1]$ be a smooth cutoff function such
that $\psi(r)=1$ if $r\leq1/2$ and $\psi(r)=0$ if $r\geq1$. For
$n>0$ large enough, then
\[
\psi_{n}(\bx)=\psi\biggl(\frac{\log\langle\log\langle\bx\rangle\rangle}{\log\langle\log\langle n\rangle\rangle}\biggr)\,,
\]
is a cutoff function such that $\psi_{n}(\bx)=0$ if $\left|\bx\right|\geq n$
and $\psi_{n}(\bx)=1$ if $\left|\bx\right|\leq\gamma_{n}$ where
\[
\gamma_{n}=\exp\left(\sqrt{\langle\log\langle n\rangle\rangle}-1\right)-1\,.
\]
Explicitly, we have
\begin{equation}
\left|\bnabla\psi_{n}(\bx)\right|\leq\frac{\left\Vert \psi^{\prime}\right\Vert _{\infty}}{\log\langle\log\langle n\rangle\rangle}\weight(\bx)\,,\label{eq:psi-grad}
\end{equation}

Therefore $\psi_{n}\bu$ has compact support, vanishing mean on $\omega$,
belongs to $H^{1}(\Omega)$ and converges to $\bu$ in $\dot{H}^{1}(\Omega)$
as $n\to\infty$ by using \eqref{psi-grad} and applying \lemref{generalized-hardy}
(see \citealp{Galdi-IntroductiontoMathematical2011}, Theorems II.7.1
\& II.7.2). Moreover, $\psi_{n}\bu$ is divergence-free except on
the annulus $\gamma_{n}\leq\left|\bx\right|\leq n$. There exists
a corrector $\bw_{n}\in\dot{H}^{1}(\Omega)$ having support in the
annulus $\gamma_{n}\leq\left|\bx\right|\leq n$ such that $\psi_{n}\bu+\bw_{n}$
is divergence-free and $\left\Vert \bw_{n}\right\Vert _{\dot{H}^{1}(\Omega)}\leq C\left\Vert \bu\bcdot\bnabla\psi_{n}\right\Vert _{L^{2}(\Omega)}$
with $C>0$ independent of $n$ (see \citealp{Galdi-IntroductiontoMathematical2011},
Theorem III.3.1). Therefore, $\psi_{n}\bu+\bw_{n}$ has support in
$B_{n}$, zero mean on $\omega$, vanishing trace on $\partial\Omega$,
belongs to $\dot{H}_{\sigma}^{1}(\Omega)$ and converges to $\bu$
in $\dot{H}_{\sigma}^{1}(\Omega)$ by \eqref{psi-grad} and \lemref{generalized-hardy}.
Now for any $n>0$, there exists a smoothing $\bu_{n}\in C_{0,\sigma}^{\infty}(\Omega)$
of $\psi_{n}\bu+\bw_{n}$ such that
\[
\bigl\Vert\psi_{n}\bu+\bw_{n}-\bu_{n}\bigr\Vert_{\dot{H}^{1}(\Omega)}+\bigl\Vert\psi_{n}\bu+\bw_{n}-\bu_{n}\bigr\Vert_{L^{2}(\Omega\cap B_{n})}\leq\frac{1}{n}\,.
\]
(see \citealp{Galdi-IntroductiontoMathematical2011}, Theorems III.4.1
\& III.4.2). Hence we have
\[
\left|\fint_{\omega}\bu_{n}\right|=\left|\fint_{\omega}(\bu_{n}-\psi_{n}\bu)\right|\leq\fint_{\omega}\left|\bu_{n}-\psi_{n}\bu\right|\leq\left|\omega\right|^{-1/2}\left\Vert \psi_{n}\bu-\bu_{n}\right\Vert _{L^{2}(\omega)}\leq\frac{1}{\left|\omega\right|^{1/2}n}\,.
\]
Finally, it is not hard to find two explicit functions $\bv_{i}\in C_{0,\sigma}^{\infty}(\Omega)$
such that $\fint_{\omega}\bv_{i}=\be_{i}$ for $i=1,2$. Therefore
$\bu_{n}+\left(\be_{1}\otimes\bv_{1}+\be_{2}\otimes\bv_{2}\right)\bcdot\fint_{\omega}\bu_{n}\in C_{0,\sigma}^{\infty}(\Omega,\omega)$
converges to $\bu$ in $\dot{H}_{\sigma}^{1}(\Omega,\omega)$ as $n\to\infty$.
\end{proof}
Finally, we discuss conditions on which $\bff$ can be represented
as $\bff=\bnabla\bcdot\bbF$ with $\bbF\in L^{2}(\Omega)$ and in
particular we prove the claims made in \remref{exterior-L2-compact,R2-L2-compact}.
\begin{lem}
\label{lem:exterior-representation}Let $\Omega\subset\mathbb{R}^{2}$
be an exterior domain having a compact connected Lipschitz boundary
$\partial\Omega\neq\emptyset$. Let $\bff\in L_{\loc}^{1}(\Omega)$.
If the linear form $\bphi\mapsto\bigl\langle\bff,\bphi\bigr\rangle_{L^{2}(\Omega)}$
is continuous on $\dot{H}_{0,\sigma}^{1}(\Omega)$, then there exists
$\bF\in L^{2}(\Omega)$ such that $\bff=\bnabla\bcdot\bF$ in the
following sense:
\[
\bigl\langle\bff,\bphi\bigr\rangle_{L^{2}(\Omega)}=-\bigl\langle\bbF,\bnabla\bphi\bigr\rangle_{L^{2}(\Omega)}\,,
\]
for all $\bphi\in C_{0,\sigma}^{\infty}(\Omega)$. In particular this
holds when $\bff/\weight\in L^{2}(\Omega)$.\end{lem}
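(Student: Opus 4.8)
The plan is to read the desired identity as a Riesz representation in the Hilbert space $\dot{H}_{0,\sigma}^{1}(\Omega)$. Since $\partial\Omega\neq\emptyset$, \propref{completion-gamma} tells us that $\dot{H}_{0,\sigma}^{1}(\Omega)$, equipped with the inner product $\langle\bu,\bv\rangle=\langle\bnabla\bu,\bnabla\bv\rangle_{L^{2}(\Omega)}$, is a Hilbert space and coincides with the completion of $C_{0,\sigma}^{\infty}(\Omega)$ in that norm. The linear form $\bphi\mapsto\langle\bff,\bphi\rangle_{L^{2}(\Omega)}$ is well defined on $C_{0,\sigma}^{\infty}(\Omega)$ because $\bff\in L_{\loc}^{1}(\Omega)$ and test functions are compactly supported; by hypothesis it is bounded for the $\dot{H}^{1}(\Omega)$ semi-norm, hence extends uniquely to a bounded linear functional on all of $\dot{H}_{0,\sigma}^{1}(\Omega)$.

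First I would apply the Riesz representation theorem to obtain a (unique) $\bu\in\dot{H}_{0,\sigma}^{1}(\Omega)$ with $\langle\bff,\bphi\rangle_{L^{2}(\Omega)}=\langle\bnabla\bu,\bnabla\bphi\rangle_{L^{2}(\Omega)}$ for every $\bphi\in\dot{H}_{0,\sigma}^{1}(\Omega)$, in particular for every $\bphi\in C_{0,\sigma}^{\infty}(\Omega)$. I would then set $\bbF=-\bnabla\bu$. By construction $\bbF\in L^{2}(\Omega)$ (in fact $\|\bbF\|_{L^{2}(\Omega)}$ is exactly the norm of the functional), and for every $\bphi\in C_{0,\sigma}^{\infty}(\Omega)$,
\[
\langle\bff,\bphi\rangle_{L^{2}(\Omega)}=\langle\bnabla\bu,\bnabla\bphi\rangle_{L^{2}(\Omega)}=-\langle\bbF,\bnabla\bphi\rangle_{L^{2}(\Omega)}\,,
\]
which is precisely the asserted sense of $\bff=\bnabla\bcdot\bbF$.

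It remains to verify that $\bff/\weight\in L^{2}(\Omega)$ is a sufficient condition. Here I would use the equivalent norm of \propref{completion-gamma} — equivalently, the generalized Hardy inequality of \lemref{generalized-hardy} with $\lambda=\partial\Omega$, applied to functions with vanishing trace — namely $\|\bphi\weight\|_{L^{2}(\Omega)}\leq C\|\bnabla\bphi\|_{L^{2}(\Omega)}$ for $\bphi\in\dot{H}_{0,\sigma}^{1}(\Omega)$. Then Cauchy--Schwarz gives
\[
\left|\langle\bff,\bphi\rangle_{L^{2}(\Omega)}\right|=\left|\langle\bff/\weight,\weight\bphi\rangle_{L^{2}(\Omega)}\right|\leq\|\bff/\weight\|_{L^{2}(\Omega)}\,\|\weight\bphi\|_{L^{2}(\Omega)}\leq C\|\bff/\weight\|_{L^{2}(\Omega)}\,\|\bnabla\bphi\|_{L^{2}(\Omega)}\,,
\]
so the linear form is continuous and the first part applies.

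There is no genuine obstacle in this argument: all the substance has already been packed into \propref{completion-gamma} — ultimately into the Poincaré and Hardy inequalities of \lemref{generalized-poincare} and \lemref{generalized-hardy} — which is exactly where the hypothesis $\partial\Omega\neq\emptyset$ enters to make $\langle\bnabla\cdot,\bnabla\cdot\rangle$ a genuine inner product rather than a mere semi-inner product. The only point deserving a word of care is that the representation is required only on the dense subspace $C_{0,\sigma}^{\infty}(\Omega)$, so the Riesz step alone finishes the proof with no further density or approximation argument needed.
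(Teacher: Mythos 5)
Your proof is correct and follows essentially the same route as the paper: Riesz representation in the Hilbert space $\dot{H}_{0,\sigma}^{1}(\Omega)$ (which is a genuine Hilbert space precisely because $\partial\Omega\neq\emptyset$), followed by the generalized Hardy inequality with $\lambda=\partial\Omega$ to verify the sufficient condition $\bff/\weight\in L^{2}(\Omega)$. Your choice $\bbF=-\bnabla\bu$ even matches the sign convention in the statement more carefully than the paper's own $\bbF=\bnabla\bu$.
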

\begin{proof}
By using Riesz representation theorem, there exists $\bu\in\dot{H}_{0,\sigma}^{1}(\Omega)$
such that
\[
\bigl\langle\bnabla\bu,\bnabla\bphi\bigr\rangle_{L^{2}(\Omega)}=\bigl\langle\bff,\bphi\bigr\rangle_{L^{2}(\Omega)}\,,
\]
for all $\bphi\in\dot{H}_{0}^{1}(\Omega)$ and we can take $\bbF=\bnabla\bu$.
If $\bff/\weight\in L^{2}(\Omega)$, then by \lemref{generalized-hardy}
with $\lambda=\partial\Omega$, we have
\[
\Bigl|\bigl\langle\bff,\bphi\bigr\rangle_{L^{2}(\Omega)}\Bigr|\leq\bigl\Vert\bff/\weight\bigr\Vert_{L^{2}(\Omega)}\bigl\Vert\bphi\weight\bigr\Vert_{L^{2}(\Omega)}\leq C\bigl\Vert\bnabla\bphi\bigr\Vert_{L^{2}(\Omega)}\,,
\]
so the linear form is continuous on $\dot{H}_{0}^{1}(\Omega)$.\end{proof}
\begin{lem}
\label{lem:R2-representation}Let $\Omega\subset\mathbb{R}^{2}$ be
an exterior domain having a compact connected Lipschitz boundary (in
particular $\Omega=\mathbb{R}^{2}$ is allowed). Let $\bff\in L^{1}(\Omega)$.
If the linear form $\bphi\mapsto\bigl\langle\bff,\bphi\bigr\rangle_{L^{2}(\Omega)}$
is continuous on $\dot{H}_{0,\sigma}^{1}(\Omega,\omega)$ and $\int_{\Omega}\bff=\bzero$,
then there exists $\bF\in L^{2}(\Omega)$ such that $\bff=\bnabla\bcdot\bF$
in the following sense:
\[
\bigl\langle\bff,\bphi\bigr\rangle_{L^{2}(\Omega)}=-\bigl\langle\bbF,\bnabla\bphi\bigr\rangle_{L^{2}(\Omega)}\,,
\]
for all $\bphi\in C_{0,\sigma}^{\infty}(\Omega)$. In particular this
holds when $\bff/\weight\in L^{2}(\Omega)$ and $\int_{\Omega}\bff=\bzero$.\end{lem}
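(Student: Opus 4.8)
The plan is to run the argument of \lemref{exterior-representation} with the space $\dot{H}_{0,\sigma}^{1}(\Omega,\omega)$ in place of $\dot{H}_{0,\sigma}^{1}(\Omega)$, since the $\dot{H}^{1}$ semi-norm is a norm on $\dot{H}_{0,\sigma}^{1}(\Omega,\omega)$ by \propref{completion-omega}, even when $\Omega=\mathbb{R}^{2}$. Assuming $\bphi\mapsto\langle\bff,\bphi\rangle_{L^{2}(\Omega)}$ is continuous on $\dot{H}_{0,\sigma}^{1}(\Omega,\omega)$, Riesz representation provides $\bu\in\dot{H}_{0,\sigma}^{1}(\Omega,\omega)$ with $\langle\bnabla\bu,\bnabla\bphi\rangle_{L^{2}(\Omega)}=\langle\bff,\bphi\rangle_{L^{2}(\Omega)}$ for all $\bphi$ in that space; setting $\bbF=-\bnabla\bu\in L^{2}(\Omega)$ then gives $\langle\bff,\bphi\rangle_{L^{2}(\Omega)}=-\langle\bbF,\bnabla\bphi\rangle_{L^{2}(\Omega)}$ for every $\bphi\in C_{0,\sigma}^{\infty}(\Omega)$ with $\fint_{\omega}\bphi=\bzero$, and it remains to lift this restriction. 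Fixing $\bv_{1},\bv_{2}\in C_{0,\sigma}^{\infty}(\Omega)$ with $\fint_{\omega}\bv_{i}=\be_{i}$ (as in the proof of \propref{completion-omega}), the defect $E(\bphi):=\langle\bff,\bphi\rangle_{L^{2}(\Omega)}+\langle\bbF,\bnabla\bphi\rangle_{L^{2}(\Omega)}$ is linear in $\bphi$, vanishes on $C_{0,\sigma}^{\infty}(\Omega,\omega)$, hence depends only on $\fint_{\omega}\bphi$; since $\bphi\mapsto\fint_{\omega}\bphi$ is onto $\mathbb{R}^{2}$, there is a linear $L:\mathbb{R}^{2}\to\mathbb{R}$ with $E(\bphi)=L(\fint_{\omega}\bphi)$, and the lemma reduces to proving $L\equiv\bzero$.

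This is the heart of the matter and the place where $\int_{\Omega}\bff=\bzero$ is used, in the case $\Omega=\mathbb{R}^{2}$. Given $\bc\in\mathbb{R}^{2}$, I would approximate the constant field $\bc$ by compactly supported solenoidal fields of vanishing Dirichlet energy via the two-dimensional stream-function trick: put $\psi_{0}(\bx)=c_{2}x_{1}-c_{1}x_{2}$, so that $\bnabla^{\perp}\psi_{0}:=(-\partial_{2}\psi_{0},\partial_{1}\psi_{0})\equiv\bc$, take a double-logarithmic cutoff $\theta_{n}(\bx)=\eta\bigl(\log\langle\bx\rangle/\log n\bigr)$ with $\eta$ smooth, $\eta=1$ on $[0,1/2]$ and $\eta=0$ on $[1,\infty)$, and set $\bphi_{n}=\bnabla^{\perp}(\psi_{0}\theta_{n})\in C_{0,\sigma}^{\infty}(\mathbb{R}^{2})$. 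Because $\psi_{0}$ is affine, $\bnabla\bphi_{n}$ only involves $\bnabla\theta_{n}$ and $\bnabla^{2}\theta_{n}$ multiplied by factors $O(|\bx|)$, and a direct estimate gives $|\bnabla\bphi_{n}(\bx)|\leq C|\bc|/(|\bx|\log n)$ on the annulus $\{\sqrt{n}\lesssim|\bx|\lesssim n\}$ carrying $\bnabla\theta_{n}$, whence $\Vert\bnabla\bphi_{n}\Vert_{L^{2}(\mathbb{R}^{2})}^{2}\leq C|\bc|^{2}/\log n\to0$. At the same time $\bphi_{n}$ is bounded by $C|\bc|$ uniformly in $n$, converges pointwise to $\bc$, and equals $\bc$ on $\omega$ once $n$ is large, so $\fint_{\omega}\bphi_{n}=\bc$. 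Consequently $\langle\bbF,\bnabla\bphi_{n}\rangle_{L^{2}(\Omega)}\to0$ by Cauchy\textendash Schwarz, while dominated convergence (here $\bff\in L^{1}(\Omega)$ and $\int_{\Omega}\bff=\bzero$ are needed) gives $\langle\bff,\bphi_{n}\rangle_{L^{2}(\Omega)}\to\bc\bcdot\int_{\Omega}\bff=\bzero$; hence $L(\bc)=E(\bphi_{n})\to0$, and $L\equiv\bzero$ since $\bc$ was arbitrary. Producing these solenoidal approximations of constants with vanishing energy is exactly the two-dimensional phenomenon at the origin of the whole difficulty, and is the only non-routine step.

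For an exterior domain with $\partial\Omega\neq\emptyset$ the sequence $\bphi_{n}$ above would not vanish near $\partial\Omega$, so instead I would simply reduce to \lemref{exterior-representation} by checking that the linear form is in fact continuous on the larger space $\dot{H}_{0,\sigma}^{1}(\Omega)$: writing $\bphi\in\dot{H}_{0,\sigma}^{1}(\Omega)$ as $\bphi=\bphi_{0}+\sum_{i}(\fint_{\omega}\bphi)_{i}\bv_{i}$ with $\bphi_{0}\in\dot{H}_{0,\sigma}^{1}(\Omega,\omega)$, one bounds $|\fint_{\omega}\bphi|\leq C\Vert\bphi\weight\Vert_{L^{2}(\Omega)}\leq C\Vert\bnabla\bphi\Vert_{L^{2}(\Omega)}$ by \lemref{generalized-hardy} with $\lambda=\partial\Omega$ (the trace of $\bphi$ on $\partial\Omega$ vanishes by \propref{completion-gamma}), so $\Vert\bnabla\bphi_{0}\Vert_{L^{2}(\Omega)}\leq C\Vert\bnabla\bphi\Vert_{L^{2}(\Omega)}$, and combining the assumed continuity on $\dot{H}_{0,\sigma}^{1}(\Omega,\omega)$ with $|\langle\bff,\bv_{i}\rangle_{L^{2}(\Omega)}|\leq\Vert\bff\Vert_{L^{1}(\Omega)}\Vert\bv_{i}\Vert_{L^{\infty}(\Omega)}$ yields the continuity on $\dot{H}_{0,\sigma}^{1}(\Omega)$; then \lemref{exterior-representation} supplies $\bbF$. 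Finally, for the ``in particular'' assertion I would note that if $\bff/\weight\in L^{2}(\Omega)$, then \lemref{generalized-hardy} with $\lambda=\omega$ gives $|\langle\bff,\bphi\rangle_{L^{2}(\Omega)}|\leq\Vert\bff/\weight\Vert_{L^{2}(\Omega)}\Vert\bphi\weight\Vert_{L^{2}(\Omega)}\leq C\Vert\bnabla\bphi\Vert_{L^{2}(\Omega)}$ for $\bphi\in\dot{H}_{0,\sigma}^{1}(\Omega,\omega)$, so the continuity hypothesis is met, and together with $\int_{\Omega}\bff=\bzero$ the previous steps apply.
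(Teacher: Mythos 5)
Your proof is correct, but it takes a longer route than the paper's, which is essentially a two-line argument: given $\bphi\in C_{0,\sigma}^{\infty}(\Omega)$, the paper sets $\bpsi=\bphi-\bar{\bphi}$ with $\bar{\bphi}=\fint_{\omega}\bphi$, observes that $\bpsi$ lies in $\dot{H}_{0,\sigma}^{1}(\Omega,\omega)$ (this is precisely the two-dimensional fact that constants belong to the completion, already packaged into the characterization of \propref{completion-omega}), that $\bnabla\bpsi=\bnabla\bphi$, and that $\bigl\langle\bff,\bpsi\bigr\rangle_{L^{2}(\Omega)}=\bigl\langle\bff,\bphi\bigr\rangle_{L^{2}(\Omega)}$ because $\bar{\bphi}\bcdot\int_{\Omega}\bff=\bzero$; the Riesz identity then transfers directly from $\bpsi$ to $\bphi$. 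Your argument reaches the same conclusion by introducing the defect functional $E$, factoring it through $\fint_{\omega}\bphi$, and then killing it by an explicit stream-function/logarithmic-cutoff approximation of constants with vanishing Dirichlet energy --- in effect you re-prove from scratch the Deny--Lions phenomenon that the paper invokes abstractly. What your version buys is twofold: it makes the mechanism behind the hypothesis $\int_{\Omega}\bff=\bzero$ completely explicit (the pairing of $\bff$ with your $\bphi_{n}$ tends to $\bc\bcdot\int_{\Omega}\bff$ while the energy term vanishes), and your separate treatment of the case $\partial\Omega\neq\emptyset$ by reduction to \lemref{exterior-representation} sidesteps a genuine subtlety in the paper's uniform argument, namely that $\bphi-\bar{\bphi}$ does not have vanishing trace on a nonempty boundary and so does not obviously belong to $\dot{H}_{0,\sigma}^{1}(\Omega,\omega)$ there; consistently, your reduction also makes visible that $\int_{\Omega}\bff=\bzero$ is not needed when $\partial\Omega\neq\emptyset$, in agreement with the remark following the lemma. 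The price is length: the paper's proof is shorter because it reuses \propref{completion-omega} instead of rebuilding the approximation of constants by hand.
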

\begin{proof}
By using Riesz representation theorem, there exists $\bu\in\dot{H}_{0,\sigma}^{1}(\Omega,\omega)$
such that
\[
\bigl\langle\bnabla\bu,\bnabla\bpsi\bigr\rangle_{L^{2}(\Omega)}=\bigl\langle\bff,\bpsi\bigr\rangle_{L^{2}(\Omega)}\,,
\]
for all $\bpsi\in\dot{H}_{0}^{1}(\Omega,\omega)$. For any $\bphi\in C_{0,\sigma}^{\infty}(\Omega)$,
let $\bpsi=\bphi-\bar{\bphi}\in\dot{H}_{0}^{1}(\Omega,\omega)$ and
therefore
\[
\bigl\langle\bnabla\bu,\bnabla\bphi\bigr\rangle_{L^{2}(\Omega)}=\bigl\langle\bff,\bpsi\bigr\rangle_{L^{2}(\Omega)}=\bigl\langle\bff,\bphi\bigr\rangle_{L^{2}(\Omega)}
\]
because $\int_{\Omega}\bff=\bzero$. If in addition $\bff/\weight\in L^{2}(\Omega)$,
then by \lemref{generalized-hardy} with $\lambda=\omega$, we have
\[
\Bigl|\bigl\langle\bff,\bpsi\bigr\rangle_{L^{2}(\Omega)}\Bigr|\leq\bigl\Vert\bff/\weight\bigr\Vert_{L^{2}(\Omega)}\bigl\Vert\bpsi\weight\bigr\Vert_{L^{2}(\Omega)}\leq C\bigl\Vert\bnabla\bpsi\bigr\Vert_{L^{2}(\Omega)}\,,
\]
for any $\bpsi\in\dot{H}_{0}^{1}(\Omega,\omega)$.\end{proof}
\begin{rem}
The hypothesis $\int_{\Omega}\bff=\bzero$ is needed only for $\Omega=\mathbb{R}^{2}$
and not if $\partial\Omega\neq\emptyset$. This fact is linked to
the Stokes paradox, since the existence proof given below works equally
well for the Stokes equation. For $\Omega=\mathbb{R}^{2}$, it is
well known that the Stokes equations have a solution in $\dot{H}_{\sigma}^{1}(\Omega)$
if and only if $\int_{\Omega}\bff=\bzero$. Otherwise, the solutions
of the Stokes equations in $\Omega=\mathbb{R}^{2}$ grow like $\log\left|\bx\right|$
at infinity, hence the Stokes equations have no solutions in $\dot{H}_{\sigma}^{1}(\Omega)$.
If $\Omega\neq\mathbb{R}^{2}$, the Stokes equations always admit
a solution in $\dot{H}_{\sigma}^{1}(\Omega)$ regardless of the mean
of $\bff$.
\end{rem}

\section{Proof of existence}

The main idea to construct weak solutions in $\Omega=\mathbb{R}^{2}$
is to construct for each $n\in\mathbb{N}$ large enough a particular
weak solution in the ball $B_{n}$ having a prescribed mean on a bounded
subset of positive measure $\omega\subset\Omega$. This can be done
be choosing a suitable constant $\bc_{n}$ on the artificial boundary
$\partial B_{n}$.
\begin{prop}
\label{prop:approximate-weak-solution}Assume that the hypotheses
of \thmref{weak-solution} hold. For any $\blambda\in\mathbb{R}^{2}$
and $n\in\mathbb{N}$ large enough such that $\omega\subset B_{n}$,
there exists $\bc_{n}\in\mathbb{R}^{2}$ and a weak solution $\bu_{n}\in\dot{H}_{\sigma}^{1}(B_{n})$
of the Navier\textendash Stokes equations \eqref{intro-ns-eq} in
$B_{n}$ such that:
\begin{enumerate}
\item $\left.\bu_{n}\right|_{\partial B_{n}}=\blambda+\bc_{n}$ in the trace
sense\,;
\item $\bigl\Vert\bnabla\bu_{n}\bigr\Vert_{L^{2}(B_{n})}=\bigl\langle\bbF,\bnabla\bu_{n}\bigr\rangle_{L^{2}(B_{n})}$\,;
\item $\fint_{\omega}\bu_{n}=\blambda$\,.
\end{enumerate}
\end{prop}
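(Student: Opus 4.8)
The plan is to set up, for each fixed $n$ with $\omega\subset B_n$, a Galerkin/Leray-type fixed-point construction on the bounded domain $B_n$ with the unknown boundary constant $\bc_n$ built into the problem. First I would introduce, for $\bc\in\mathbb{R}^2$, a fixed divergence-free extension $\ba_{n,\bc}\in H^1(B_n)$ of the constant boundary datum $\blambda+\bc$; since the datum is constant, the flux through $\partial B_n$ vanishes and such an extension exists, and it can be chosen depending affinely on $(\blambda,\bc)$, e.g. by lifting $\blambda+\bc$ with a single cutoff near $\partial B_n$, plus a small solenoidal corrector. The candidate solution is then written $\bu_n=\ba_{n,\bc}+\bw$, where $\bw\in\dot H^1_{0,\sigma}(B_n)$ is to be found from the weak formulation
\begin{equation*}
\bigl\langle\bnabla\bw,\bnabla\bphi\bigr\rangle+\bigl\langle(\ba_{n,\bc}+\bw)\bcdot\bnabla(\ba_{n,\bc}+\bw),\bphi\bigr\rangle=\bigl\langle\bbF,\bnabla\bphi\bigr\rangle
\end{equation*}
for all $\bphi\in C^\infty_{0,\sigma}(B_n)$. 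The twist is that $\bc$ is not given: we impose the two scalar constraints $\fint_\omega\bu_n=\blambda$, i.e. $\fint_\omega(\ba_{n,\bc}+\bw)=\blambda$, which should pin down $\bc\in\mathbb{R}^2$. So the real unknown is the pair $(\bw,\bc)$ solving the coupled system consisting of the Navier--Stokes weak equation together with the mean constraint.

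The key steps, in order: (1) For each \emph{fixed} $\bc$, solve the weak equation for $\bw=\bw_{n,\bc}$ by the classical Leray argument — build a Galerkin approximation in a finite-dimensional solenoidal subspace, derive the \emph{a priori} bound by testing with $\bphi=\bw$ (the standard cubic-term cancellation $\langle\bv\bcdot\bnabla\bw,\bw\rangle=0$ for $\bv$ divergence-free holds on the bounded domain $B_n$), and pass to the limit using compactness of $H^1(B_n)\hookrightarrow\hookrightarrow L^2(B_n)$; this is exactly the second step of Leray's method and is already available in the literature (\citealp{Galdi-IntroductiontoMathematical2011}, Chapter IX). The smallness of the flux is not an issue here since the flux is zero. (2) Show the solution map $\bc\mapsto\bw_{n,\bc}$ can be chosen so that the composite map $\bc\mapsto \fint_\omega(\ba_{n,\bc}+\bw_{n,\bc})-\blambda$ is continuous and has a zero. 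The clean way is a topological degree / Brouwer fixed-point argument: pick $\bc$ from a large ball in $\mathbb{R}^2$ and show the map points "inward" on its boundary once the ball is large enough relative to $\|\bbF\|_{L^2}$. The point is that $\fint_\omega\ba_{n,\bc}$ contains $\bc$ with coefficient $1$ (since $\ba_{n,\bc}=\blambda+\bc$ away from the thin boundary layer, and $\omega\subset B_n$ lies in that region for $n$ large), while $\|\bw_{n,\bc}\|_{\dot H^1}$ is bounded by a constant depending only on $\|\bbF\|_{L^2}$ and the fixed extension data, hence $\fint_\omega\bw_{n,\bc}$ is bounded by \prettyref{lem:generalized-poincare} applied on $B_n$ — uniformly in $\bc$. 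Thus the map is a perturbation of $\bc\mapsto\bc+(\blambda-\blambda)=\bc$ by a bounded term, so it is proper and its degree on a large ball is $1$. This yields $\bc_n$ and the corresponding $\bw$, hence $\bu_n$, with property (3) built in and property (1) by construction. (3) Finally, establish the energy identity (2): test the weak equation with $\bphi$ approximating $\bu_n-\ba_{n,\bc_n}=\bw$ — but to get the stated \emph{equality} $\|\bnabla\bu_n\|^2=\langle\bbF,\bnabla\bu_n\rangle$ one should instead note that $\bu_n$ vanishes in the $\dot H^1$-seminorm sense only up to the constant boundary value; since $\bu_n-(\blambda+\bc_n)\in\dot H^1_{0,\sigma}(B_n)$ and $\bnabla\bu_n=\bnabla(\bu_n-(\blambda+\bc_n))$, one may legitimately use $\bphi=\bu_n-(\blambda+\bc_n)$ as a test function (by density, \prettyref{prop:completion-gamma}), and the nonlinear term drops because the full field $\bu_n$ is divergence-free and the test function differs from it by a constant, giving $\langle\bu_n\bcdot\bnabla\bu_n,\bu_n-(\blambda+\bc_n)\rangle=\langle\bu_n\bcdot\bnabla\bu_n,\bu_n\rangle=0$; the linear terms give exactly (2).

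The main obstacle is step (2): making the dependence $\bc\mapsto\bw_{n,\bc}$ usable. The weak solution on a bounded domain need not be unique, so there is no canonical continuous selection, and one cannot naively invoke Brouwer on a non-single-valued map. The fix is to run the degree argument at the level of the whole coupled system $(\bw,\bc)$ at once — e.g. set up one homotopy in a space $\dot H^1_{0,\sigma}(B_n)\times\mathbb{R}^2$ in which the nonlinear convective term and the coupling are switched on together with a parameter $t\in[0,1]$, obtain a $t$-uniform \emph{a priori} bound on $(\|\bw\|_{\dot H^1},|\bc|)$ (the bound on $\bw$ from the energy estimate, the bound on $\bc$ from the mean constraint and \prettyref{lem:generalized-poincare}), and conclude by the Leray--Schauder principle. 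The extension $\ba_{n,\bc}$ must be chosen carefully so that its $H^1(B_n)$-norm, and in particular the constant appearing in the convective trilinear estimate $|\langle\bw\bcdot\bnabla\ba_{n,\bc},\bw\rangle|$, can be absorbed into $\tfrac12\|\bnabla\bw\|^2$; this is the familiar "small extension" device (a cutoff supported in a logarithmically thin shell near $\partial B_n$), and here it is harmless because the flux is zero, exactly as in \citealp{Galdi-IntroductiontoMathematical2011}, Lemma IX.4.2. Everything else is routine.
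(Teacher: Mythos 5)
Your overall strategy (lift the unknown constant boundary datum, solve for the remainder in $\dot H^1_{0,\sigma}(B_n)$, and use a fixed-point argument to enforce the mean constraint) is workable, but as written step (2) contains an inconsistency that breaks the degree argument, and resolving it shows that the extra machinery is unnecessary. You describe $\ba_{n,\bc}$ as a Leray--Hopf lifting ``with a single cutoff near $\partial B_n$''; such an extension vanishes away from a thin shell at the boundary, so for $n$ large it vanishes on $\omega$ and $\fint_\omega\ba_{n,\bc}=\bzero$. The constraint $\fint_\omega(\ba_{n,\bc}+\bw)=\blambda$ then reads $\fint_\omega\bw_{n,\bc}=\blambda$ and does \emph{not} contain $\bc$ with coefficient one: the map whose zero you seek is not a bounded perturbation of the identity, and the claimed degree computation fails. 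If instead you take the extension to be the constant $\ba_{n,\bc}=\blambda+\bc$ on all of $B_n$ (which is admissible: it is divergence-free, lies in $H^1(B_n)$, has the right trace, and makes the troublesome term $\bigl\langle\bw\bcdot\bnabla\ba_{n,\bc},\bw\bigr\rangle$ vanish identically since $\bnabla\ba_{n,\bc}=\bzero$), then the constraint becomes $\bc=-\fint_\omega\bw$, which determines $\bc$ \emph{explicitly} as a functional of $\bw$. No Brouwer or coupled Leray--Schauder argument in $\dot H^1_{0,\sigma}(B_n)\times\mathbb{R}^2$ is needed, and your worry about the non-uniqueness of the selection $\bc\mapsto\bw_{n,\bc}$ evaporates.

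This explicit elimination of $\bc$ is precisely the paper's proof: it posits the ansatz $\bu_n=\blambda+\bv_n-\fint_\omega\bv_n$ with $\bv_n\in\dot H^1_{0,\sigma}(B_n)$, so that condition (3) holds identically, condition (1) holds with $\bc_n=-\fint_\omega\bv_n$, and the problem reduces to a single functional equation $\bv_n+\cAn(\bv_n)-\bFn=\bzero$ in which the mean $\fint_\omega\bv_n$ enters the (completely continuous) nonlinearity. The a priori bound needed for Leray--Schauder comes from testing with $\bv_n$, the convective term vanishing because the transporting field $\blambda+\bv_n-\fint_\omega\bv_n$ is divergence-free and $\bv_n$ has zero trace on $\partial B_n$. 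Your derivation of the energy identity in step (3) is correct and essentially equivalent to this. So: replace the cutoff lifting by the constant one, observe that $\bc$ is then determined explicitly, and your argument collapses to the paper's.
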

\begin{proof}
For any vector field $\bv\in L_{\loc}^{1}(\omega)$, we denote by
$\bar{\bv}$ the mean of $\bv$ on $\omega$, $\bar{\bv}=\fint_{\omega}\bv=\frac{1}{\left|\omega\right|}\int_{\omega}\bv$.
We look for a solution of the form $\bu_{n}=\blambda+\bv_{n}-\bar{\bv}_{n}$
with $\bv_{n}\in\dot{H}_{0,\sigma}^{1}(B_{n})$ so that the third
condition of the proposition automatically holds. He have $\left.\bu_{n}\right|_{\partial B_{n}}=\blambda-\bar{\bv}_{n}$,
so the first condition is satisfied by choosing $\bc_{n}=-\bar{\bv}_{n}$.
Therefore, it remains to prove the existence of $\bv_{n}\in\dot{H}_{0,\sigma}^{1}(B_{n})$
such that
\begin{equation}
\bigl\langle\bnabla\bv_{n},\bnabla\bphi\bigr\rangle_{L^{2}(B_{n})}+\bigl\langle\bigl(\blambda+\bv_{n}-\bar{\bv}_{n}\bigr)\bcdot\bnabla\bv_{n},\bphi\bigr\rangle_{L^{2}(B_{n})}=\bigl\langle\bbF,\bnabla\bphi\bigr\rangle_{L^{2}(B_{n})}\,,\label{eq:weak-solution-approximate}
\end{equation}
for all $\bphi\in C_{0,\sigma}^{\infty}(B_{n})$.

Since
\[
\Bigl|\bigl\langle\bbF,\bnabla\bphi\bigr\rangle_{L^{2}(B_{n})}\Bigr|\leq\bigl\Vert\bbF\bigr\Vert_{L^{2}(B_{n})}\bigl\Vert\bnabla\bphi\bigr\Vert_{L^{2}(B_{n})}\leq\bigl\Vert\bbF\bigr\Vert_{L^{2}(\Omega)}\bigl\Vert\bphi\bigr\Vert_{\dot{H}_{0,\sigma}^{1}(B_{n})}\,,
\]
for all $\bphi\in\dot{H}_{0,\sigma}^{1}(B_{n})$, by using Riesz representation
theorem, there exists $\bFn\in\dot{H}_{0,\sigma}^{1}(B_{n})$, such
that
\[
\bigl\langle\bFn,\bphi\bigr\rangle_{\dot{H}_{0,\sigma}^{1}(B_{n})}=\bigl\langle\bbF,\bnabla\bphi\bigr\rangle_{L^{2}(B_{n})}\,,
\]
for all $\bphi\in C_{0,\sigma}^{\infty}(\Omega)$.

The bilinear map $\cBn$ defined by
\[
\bigl\langle\cBn(\bv,\bw),\bphi\bigr\rangle_{\dot{H}_{0,\sigma}^{1}(B_{n})}=\bigl\langle(\bv-\bar{\bv})\bcdot\bnabla\bw,\bphi\bigr\rangle_{L^{2}(B_{n})}\,,
\]
is continuous on $L^{4}(B_{n})$, 
\begin{align*}
\Bigl|\bigl\langle\cBn(\bv,\bw),\bphi\bigr\rangle_{\dot{H}_{0,\sigma}^{1}(B_{n})}\Bigr| & \leq\Bigl|\bigl\langle(\bv-\bar{\bv})\bcdot\bnabla\bphi,\bw\bigr\rangle_{L^{2}(B_{n})}\Bigr|\\
 & \leq\left(\bigl\Vert\bv\bigr\Vert_{L^{4}(B_{n})}+\bigl\Vert\bar{\bv}\bigr\Vert_{L^{4}(B_{n})}\right)\bigl\Vert\bw\bigr\Vert_{L^{4}(B_{n})}\bigl\Vert\bphi\bigr\Vert_{\dot{H}_{0,\sigma}^{1}(B_{n})}\\
 & \leq\left(1+\frac{\pi n^{2}}{\left|\omega\right|}\right)\bigl\Vert\bv\bigr\Vert_{L^{4}(B_{n})}\bigl\Vert\bw\bigr\Vert_{L^{4}(B_{n})}\bigl\Vert\bphi\bigr\Vert_{\dot{H}_{0,\sigma}^{1}(B_{n})}\,,
\end{align*}
because
\[
\bigl\Vert\bar{\bv}\bigr\Vert_{L^{4}(B_{n})}\leq\pi^{1/4}n^{1/2}\left|\bar{\bv}\right|\leq\frac{\pi^{1/4}n^{1/2}}{\left|\omega\right|}\int_{B_{n}}|\bv|\leq\frac{\pi n^{2}}{\left|\omega\right|}\bigl\Vert\bv\bigr\Vert_{L^{4}(B_{n})}\,.
\]
The linear map $\cLn$ defined by
\[
\bigl\langle\cLn(\bv),\bphi\bigr\rangle_{\dot{H}_{0,\sigma}^{1}(B_{n})}=\bigl\langle\blambda\bcdot\bnabla\bv,\bphi\bigr\rangle_{L^{2}(B_{n})}\,,
\]
is also continuous on $L^{4}(B_{n})$,
\[
\Bigl|\bigl\langle\cLn(\bv),\bphi\bigr\rangle_{\dot{H}_{0,\sigma}^{1}(B_{n})}\Bigr|\leq\Bigl|\bigl\langle\blambda\bcdot\bnabla\bphi,\bv\bigr\rangle_{L^{2}(B_{n})}\Bigr|\leq\bigl\Vert\blambda\bigr\Vert_{L^{4}(B_{n})}\bigl\Vert\bw\bigr\Vert_{L^{4}(B_{n})}\bigl\Vert\bphi\bigr\Vert_{\dot{H}_{0,\sigma}^{1}(B_{n})}\,.
\]
Therefore, the map $\cAn:\dot{H}_{0,\sigma}^{1}(B_{n})\to\dot{H}_{0,\sigma}^{1}(B_{n})$
defined by $\cAn(\bv)=\cBn(\bv,\bv)+\cLn(\bv)$ is continuous on $\dot{H}_{0,\sigma}^{1}(B_{n})$
when equipped with the $L^{4}$-norm, hence completely continuous
on $\dot{H}_{0,\sigma}^{1}(B_{n})$, since $\dot{H}_{0,\sigma}^{1}(B_{n})$
is compactly embedded in $L^{4}(B_{n})$.

We have
\[
\bigl\langle\bv_{n}+\cAn(\bv_{n})-\bFn,\bphi\bigr\rangle_{\dot{H}_{0,\sigma}^{1}(B_{n})}=\bigl\langle\bnabla\bv_{n},\bnabla\bphi\bigr\rangle_{L^{2}(B_{n})}+\bigl\langle\bigl(\blambda+\bv_{n}-\bar{\bv}_{n}\bigr)\bcdot\bnabla\bv_{n},\bphi\bigr\rangle_{L^{2}(B_{n})}+\bigl\langle\bff,\bphi\bigr\rangle_{L^{2}(B_{n})}\,,
\]
so the weak formulation \eqref{weak-solution-approximate} is equivalent
to the functional equation
\begin{equation}
\bv_{n}+\cAn(\bv_{n})-\bFn=\bzero\label{eq:ns-functional}
\end{equation}
in $\dot{H}_{0,\sigma}^{1}(B_{n})$. From the Leray\textendash Schauder
fixed point theorem \citep[see for example][Theorem 11.6]{Gilbarg.Trudinger-EllipticPartialDifferential1988}
to prove the existence of a solution to \eqref{ns-functional} it
is sufficient to prove that the set of solutions $\bv$ of the equation
\begin{equation}
\bv_{n}+\lambda\left(\cAn(\bv_{n})-\bFn\right)=\bzero\label{eq:ns-functional-lambda}
\end{equation}
is uniformly bounded in $\lambda\in\left[0,1\right]$. To this end,
we take the scalar product of \eqref{ns-functional-lambda} with $\bv_{n}$,
\[
\bigl\langle\bnabla\bv_{n},\bnabla\bv_{n}\bigr\rangle_{L^{2}(B_{n})}+\lambda\bigl\langle\bigl(\blambda+\bv_{n}-\bar{\bv}_{n}\bigr)\bcdot\bnabla\bv_{n},\bv_{n}\bigr\rangle_{L^{2}(B_{n})}=\lambda\bigl\langle\bbF,\bnabla\bv_{n}\bigr\rangle_{L^{2}(B_{n})}\,.
\]
By integrating by parts, we obtain
\[
\bigl\langle\bnabla\bv_{n},\bnabla\bv_{n}\bigr\rangle_{L^{2}(B_{n})}=\lambda\bigl\langle\bbF,\bnabla\bv_{n}\bigr\rangle_{L^{2}(B_{n})}\,,
\]
so 
\[
\bigl\Vert\bnabla\bv_{n}\bigr\Vert_{L^{2}(B_{n})}\leq\bigl\Vert\bF\bigr\Vert_{L^{2}(B_{n})}\leq\bigl\Vert\bF\bigr\Vert_{L^{2}(\Omega)}\,.
\]

\end{proof}
Now we can prove the existence of weak solutions in $\Omega=\mathbb{R}^{2}$
by using the method of invading domains:
\begin{proof}[Proof of \thmref{weak-solution}]
By \propref{approximate-weak-solution}, for any $n\in\mathbb{N}$,
there exists $\bc_{n}\in\mathbb{R}^{2}$ and a weak solution $\bu_{n}\in\dot{H}_{\sigma}^{1}(B_{n})$
satisfying the three conditions of this proposition. We write $\bu_{n}=\blambda+\bv_{n}$,
so extending $\bv_{n}$ to $\Omega$ by setting $\bv_{n}=\bc_{n}$
on $\Omega\setminus B_{n}$, we have
\begin{align*}
\bigl\Vert\bnabla\bv_{n}\bigr\Vert_{L^{2}(\Omega)} & =\bigl\langle\bbF,\bnabla\bv_{n}\bigr\rangle_{L^{2}(\Omega)}\,, & \fint_{\omega}\bv_{n} & =\bzero\,,
\end{align*}
and $\left(\bv_{n}\right)_{n\in\mathbb{N}}$ is bounded by $\left\Vert \bbF\right\Vert _{L^{2}(\Omega)}$
in the function space $\dot{H}_{0,\sigma}^{1}(\Omega,\omega)$ defined
by \propref{completion-omega}. Therefore, there exists a subsequence
also denoted by $\left(\bv_{n}\right)_{n\in\mathbb{N}}$ which converges
weakly to $\bv\in\dot{H}_{0,\sigma}^{1}(\Omega,\omega)$. Let $\bu=\blambda+\bv$.
We directly obtain that
\[
\bigl\Vert\bnabla\bu\bigr\Vert_{L^{2}(\Omega)}^{2}=\bigl\Vert\bnabla\bv\bigr\Vert_{L^{2}(\Omega)}^{2}\leq\liminf_{n\to\infty}\bigl\Vert\bnabla\bv_{n}\bigr\Vert_{L^{2}(\Omega)}^{2}\,,
\]
and
\[
\lim_{n\to\infty}\bigl\langle\bbF,\bnabla\bv_{n}\bigr\rangle_{L^{2}(\Omega)}=\bigl\langle\bbF,\bnabla\bv\bigr\rangle_{L^{2}(\Omega)}=\bigl\langle\bbF,\bnabla\bu\bigr\rangle_{L^{2}(\Omega)}\,,
\]
so the energy inequality \eqref{energy-inequality} is proven.

We now prove that the limit $\bu$ is a weak solution to the Navier\textendash Stokes
equations in $\Omega$. Let $\bphi\in C_{0,\sigma}^{\infty}(\Omega)$.
There exists $m\in\mathbb{N}$ such that the support of $\bphi$ is
contained in $B_{m}$. In view of \propref{completion-omega}, $\left(\bv_{n}\right)_{n\in\mathbb{N}}$
is bounded in $H^{1}(B_{m})$, so there exists a subsequence also
denoted by $\left(\bv_{n}\right)_{n\in\mathbb{N}}$ which converging
strongly to $\bv$ in $L^{4}(B_{m})$, since $H^{1}(B_{m})$ is compactly
embedded in $L^{4}(B_{m})$. Since $\bu_{n}=\blambda+\bv_{n}$ is
a weak solution in $B_{n}$, we have
\[
\bigl\langle\bnabla\bu_{n},\bnabla\bphi\bigr\rangle_{L^{2}(B_{m})}+\bigl\langle\bu_{n}\bcdot\bnabla\bu_{n},\bphi\bigr\rangle_{L^{2}(B_{m})}=\bigl\langle\bbF,\bnabla\bphi\bigr\rangle_{L^{2}(B_{m})}\,,
\]
for any $n\geq m$ and it only remains to show that this equation
remains valid in the limit $n\to\infty$. Let $\bpsi=\bphi-\fint_{\omega}\bphi$,
where by \propref{completion-omega}, $\bpsi\in\dot{H}_{0,\sigma}^{1}(\Omega,\omega)$.
By definition of the weak convergence, 
\[
\lim_{n\to\infty}\bigl\langle\bnabla\bu_{n},\bnabla\bphi\bigr\rangle_{L^{2}(B_{m})}=\lim_{n\to\infty}\bigl\langle\bv_{n},\bpsi\bigr\rangle_{\dot{H}_{0,\sigma}^{1}(\Omega,\omega)}=\bigl\langle\bv,\bpsi\bigr\rangle_{\dot{H}_{0,\sigma}^{1}(\Omega,\omega)}=\bigl\langle\bnabla\bu,\bnabla\bphi\bigr\rangle_{L^{2}(B_{m})}\,.
\]
Since $\bphi$ has compact support in $B_{m}$, we have 
\begin{align*}
\Bigl|\bigl\langle\bu_{n}\bcdot\bnabla\bu_{n}-\bu\bcdot\bnabla\bu,\bphi\bigr\rangle_{L^{2}(B_{m})}\Bigr| & \leq\Bigl|\bigl\langle\left(\bu_{n}-\bu\right)\bcdot\bnabla\bu_{n},\bphi\bigr\rangle_{L^{2}(B_{m})}\Bigr|+\Bigl|\bigl\langle\bu\bcdot\left(\bnabla\bu_{n}-\bnabla\bu\right),\bphi\bigr\rangle_{L^{2}(B_{m})}\Bigr|\\
 & \leq\Bigl|\bigl\langle\left(\bv_{n}-\bv\right)\bcdot\bnabla\bv_{n},\bphi\bigr\rangle_{L^{2}(B_{m})}\Bigr|+\Bigl|\bigl\langle\bu\bcdot\bnabla\bphi,\bv_{n}-\bv\bigr\rangle_{L^{2}(B_{m})}\Bigr|\\
 & \leq\left(\bigl\Vert\bnabla\bv_{n}\bigr\Vert_{L^{2}(B_{m})}\bigl\Vert\bphi\bigr\Vert_{L^{4}(B_{m})}+\bigl\Vert\bu\bigr\Vert{}_{L^{4}(B_{m})}\bigl\Vert\bnabla\bphi\bigr\Vert_{L^{4}(B_{m})}\right)\bigl\Vert\bv_{n}-\bv\bigr\Vert_{L^{4}(B_{m})}\,,
\end{align*}
so 
\[
\lim_{n\to\infty}\bigl(\bu_{n}\bcdot\bnabla\bu_{n},\bphi\bigr)=\bigl(\bu\bcdot\bnabla\bu,\bphi\bigr)\,,
\]
and $\bu$ satisfies \eqref{weak-solution}.
\end{proof}

\section{Proof of uniqueness}

We first start with the following approximation lemma:
\begin{lem}
\label{lem:ubar-approx}For $\Omega=\mathbb{R}^{2}$, if $\tilde{\bv}\in\dot{H}_{\sigma}^{1}(\Omega)$
satisfies $\tilde{\bv}/\weight\in L^{\infty}(\Omega)$, then there
exists a sequence $\left(\tilde{\bv}_{n}\right)_{n\in\mathbb{N}}\subset C_{0,\sigma}^{\infty}(\Omega)$
such that $\tilde{\bv}_{n}\to\tilde{\bv}$ strongly in $\dot{H}_{\sigma}^{1}(\Omega)$
and $\bu\otimes\tilde{\bv}_{n}\to\bu\otimes\tilde{\bv}$ strongly
in $L^{2}(\Omega)$ for any $\bu\in\dot{H}_{\sigma}^{1}(\Omega)$.\end{lem}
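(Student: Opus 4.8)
The plan is to avoid cutting off $\tilde{\bv}$ directly and instead to cut off its stream function. In two dimensions this replaces the divergence corrector of \propref{completion-omega} --- which is supported in a very large, very thin annulus and admits no useful pointwise bound --- by a term $\Psi\,\bnabla^{\perp}\psi_{n}$ that is automatically divergence-free and, crucially, stays bounded by a fixed multiple of $\weight$ uniformly in $n$; this last property is exactly what makes the product with an arbitrary $\bu\in\dot{H}_{\sigma}^{1}(\Omega)$ converge in $L^{2}(\Omega)$. First I would introduce the stream function: since $\Omega=\mathbb{R}^{2}$ is simply connected and $\tilde{\bv}\in\dot{H}_{\sigma}^{1}(\Omega)\subset W_{\loc}^{1,2}(\Omega)$ is weakly divergence-free, there is $\Psi\in H_{\loc}^{2}(\Omega)$ with $\tilde{\bv}=\bnabla^{\perp}\Psi$, hence $\bnabla\Psi=\tilde{\bv}^{\perp}$; normalise it by $\fint_{B_{1}}\Psi=0$. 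The hypothesis $\tilde{\bv}/\weight\in L^{\infty}(\Omega)$ gives $|\bnabla\Psi|\leq M\weight$ with $M=\|\tilde{\bv}/\weight\|_{L^{\infty}(\Omega)}$, and integrating this bound along a radial segment followed by a circular arc, using $\int_{1}^{r}\langle s\rangle^{-1}\langle\log\langle s\rangle\rangle^{-1}\,\rd s\leq\log\langle\log\langle r\rangle\rangle$, yields the crucial growth bound
\[
|\Psi(\bx)|\leq C\langle\log\langle\log\langle\bx\rangle\rangle\rangle\,,\qquad\bx\in\Omega\,,
\]
with $C$ depending only on $M$. I also record that $\weight\in L^{2}(\Omega)$ and, by \lemref{generalized-hardy} with $\lambda=B_{1}$, that $\bu\weight\in L^{2}(\Omega)$ for every $\bu\in\dot{H}_{\sigma}^{1}(\Omega)$.

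Next let $\psi_{n}$ be the cutoff used in the proof of \propref{completion-omega}: $\psi_{n}=1$ on $B_{\gamma_{n}}$ with $\gamma_{n}\to\infty$, $\psi_{n}=0$ outside $B_{n}$, and $|\bnabla\psi_{n}|\leq\frac{C}{\log\langle\log\langle n\rangle\rangle}\weight$ by \eqref{psi-grad}; differentiating its explicit expression once more gives $|\bnabla^{2}\psi_{n}|\leq\frac{C}{\gamma_{n}\log\langle\log\langle n\rangle\rangle}\weight$ on the set $\{\gamma_{n}\leq|\bx|\leq n\}$ where $\bnabla\psi_{n}$ is supported. Put $\bz_{n}:=\bnabla^{\perp}(\psi_{n}\Psi)=\psi_{n}\tilde{\bv}+\Psi\,\bnabla^{\perp}\psi_{n}$, which is divergence-free, supported in $\widebar{B}_{n}$, and in $H^{1}(\Omega)$. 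For the $\dot{H}^{1}$ convergence I would expand $\bnabla(\bz_{n}-\tilde{\bv})=(\psi_{n}-1)\bnabla\tilde{\bv}+\bnabla\psi_{n}\otimes\tilde{\bv}+\tilde{\bv}^{\perp}\otimes\bnabla^{\perp}\psi_{n}+\Psi\,\bnabla^{2}\psi_{n}$: the first term is dominated by $|\bnabla\tilde{\bv}|\in L^{2}$ and supported outside $B_{\gamma_{n}}$, so it vanishes in $L^{2}$ by dominated convergence; the two middle terms are $\lesssim\frac{1}{\log\langle\log\langle n\rangle\rangle}\weight^{2}$ with $\weight^{2}\in L^{2}$; and $\Psi\,\bnabla^{2}\psi_{n}$ is $\lesssim\gamma_{n}^{-1}\weight$ on its support. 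Hence $\bz_{n}\to\tilde{\bv}$ in $\dot{H}^{1}(\Omega)$.

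For the product convergence, fix $\bu\in\dot{H}_{\sigma}^{1}(\Omega)$ and split $\bu\otimes(\bz_{n}-\tilde{\bv})=\bu\otimes(\psi_{n}-1)\tilde{\bv}+\bu\otimes\Psi\,\bnabla^{\perp}\psi_{n}$. The first term is dominated by $M|\bu|\weight\,\mathbf{1}_{\{|\bx|>\gamma_{n}\}}$, with $|\bu|\weight\in L^{2}$, so it vanishes in $L^{2}$ by dominated convergence. For the second term, on the support of $\bnabla\psi_{n}$ one has $|\bx|<n$, so $\langle\log\langle\log\langle\bx\rangle\rangle\rangle\leq\langle\log\langle\log\langle n\rangle\rangle\rangle$ and therefore
\[
|\Psi\,\bnabla^{\perp}\psi_{n}|\leq C\langle\log\langle\log\langle\bx\rangle\rangle\rangle\cdot\frac{C}{\log\langle\log\langle n\rangle\rangle}\weight\leq C'\weight
\]
uniformly in $n$; this estimate, in which the at-most-$\log\log$ growth of $\Psi$ is exactly absorbed by the $\langle\log\langle\log\langle n\rangle\rangle\rangle^{-1}$ smallness of $\bnabla\psi_{n}$ on the region that matters, is the heart of the argument. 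Consequently $|\bu\otimes\Psi\,\bnabla^{\perp}\psi_{n}|\leq C'|\bu|\weight\,\mathbf{1}_{\{|\bx|>\gamma_{n}\}}\to0$ in $L^{2}$, so $\bu\otimes\bz_{n}\to\bu\otimes\tilde{\bv}$ in $L^{2}(\Omega)$ for every $\bu\in\dot{H}_{\sigma}^{1}(\Omega)$.

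Finally I would smooth: each $\bz_{n}\in H^{1}(\Omega)$ is divergence-free with compact support in $\widebar{B}_{n}$, so for $\delta<1$ the mollification $\rho_{\delta}*\bz_{n}$ lies in $C_{0,\sigma}^{\infty}(\Omega)$, is supported in $B_{n+1}$, and converges to $\bz_{n}$ in $H^{1}(\Omega)$, hence in $L^{4}(\Omega)$, as $\delta\to0$; pick $\delta_{n}<1$ so that $\tilde{\bv}_{n}:=\rho_{\delta_{n}}*\bz_{n}$ satisfies $\|\tilde{\bv}_{n}-\bz_{n}\|_{\dot{H}^{1}(\Omega)}+\|\tilde{\bv}_{n}-\bz_{n}\|_{L^{4}(\Omega)}\leq1/n$. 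Since $\bu\weight\in L^{2}(\Omega)$ and $\weight\gtrsim(n\log n)^{-1}$ on $B_{n}$, we get $\|\bu\|_{L^{2}(B_{n})}\lesssim n\log n$, and then the two-dimensional inequality $\|\bu\|_{L^{4}(B_{n})}^{4}\lesssim\|\bu\|_{L^{2}(B_{n})}^{2}\|\bnabla\bu\|_{L^{2}(B_{n})}^{2}+n^{-2}\|\bu\|_{L^{2}(B_{n})}^{4}$ (obtained from the Ladyzhenskaya inequality on $B_{1}$ by scaling) gives $\|\bu\|_{L^{4}(B_{n+1})}\lesssim n^{1/2}\log n=o(n)$. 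Hence $\|\bu\otimes(\tilde{\bv}_{n}-\bz_{n})\|_{L^{2}(\Omega)}\leq\|\bu\|_{L^{4}(B_{n+1})}\|\tilde{\bv}_{n}-\bz_{n}\|_{L^{4}(B_{n+1})}\leq\tfrac1n\|\bu\|_{L^{4}(B_{n+1})}\to0$, while $\|\tilde{\bv}_{n}-\bz_{n}\|_{\dot{H}^{1}(\Omega)}\to0$; combining with the previous steps yields the assertion. The main obstacle is the one isolated above: a naive cutoff of $\tilde{\bv}$ together with a divergence corrector does not obviously work, because that corrector lives in an annulus whose outer radius $n$ is far too large relative to its inner radius $\gamma_{n}$ for crude Sobolev estimates to close, whereas the stream-function version keeps the entire cutoff term under the single uniform pointwise bound $|\Psi\,\bnabla^{\perp}\psi_{n}|\leq C\weight$.
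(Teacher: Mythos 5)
Your proof is correct and follows the same core strategy as the paper: pass to the stream function of $\tilde{\bv}$, exploit the growth bound $|\tilde{\psi}(\bx)|\leq C\langle\log\langle\log\langle\bx\rangle\rangle\rangle$ coming from $\tilde{\bv}/\weight\in L^{\infty}(\Omega)$, cut off the stream function rather than $\tilde{\bv}$ itself so that no divergence corrector is needed, and mollify at the end. The one genuine difference is the choice of cutoff. The paper introduces a \emph{new} triple-logarithmic cutoff $\eta_{n}$ whose gradient carries an extra factor $\langle\log\langle\log\langle\bx\rangle\rangle\rangle^{-1}$ relative to $\weight$ (see \eqref{eta-grad}); this factor cancels the growth of the stream function pointwise and yields the explicit rate $\bigl\Vert\tilde{\psi}\bu\otimes\bnabla\eta_{n}\bigr\Vert_{L^{2}(\Omega)}\lesssim\bigl(\log\langle\log\langle\log\langle n\rangle\rangle\rangle\bigr)^{-1}\bigl\Vert\bu\weight\bigr\Vert_{L^{2}(\Omega)}$. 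You instead reuse the double-logarithmic cutoff $\psi_{n}$ of \propref{completion-omega} and observe that $|\Psi\,\bnabla\psi_{n}|\leq C\weight$ holds merely \emph{uniformly} in $n$ (the $\log\log$ growth of $\Psi$ on $\{|\bx|\leq n\}$ is exactly matched, not beaten, by the $(\log\langle\log\langle n\rangle\rangle)^{-1}$ smallness of $\bnabla\psi_{n}$), and you then conclude by dominated convergence because the support of $\bnabla\psi_{n}$ escapes to infinity; the same reasoning handles the terms $\bnabla\psi_{n}\otimes\tilde{\bv}$ and $\Psi\,\bnabla^{2}\psi_{n}$ in the gradient estimate, and I have checked that your second-derivative bound on $\psi_{n}$ is what the explicit formula gives on the annulus $\gamma_{n}\leq|\bx|\leq n$. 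Both mechanisms are valid; the paper's buys a quantitative rate, yours avoids introducing a second cutoff. You are also more careful than the paper about the final mollification: the paper dismisses it as standard, whereas you verify that the product convergence survives smoothing via $\bigl\Vert\bu\bigr\Vert_{L^{4}(B_{n+1})}\lesssim n^{1/2}\log n=o(n)$ together with the $1/n$ closeness in $L^{4}$, a worthwhile addition since $\bu\in\dot{H}_{\sigma}^{1}(\Omega)$ is only locally $L^{4}$.
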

\begin{proof}
First of all we need a better Sobolev cut-off than the one used in
the proof of \propref{completion-omega}. Let $\eta:\mathbb{R}^{+}\to[0,1]$
be a smooth cutoff function such that $\eta(r)=1$ if $r\leq1/2$
and $\eta(r)=0$ if $r\geq1$. For $n>0$ large enough, then
\[
\eta_{n}(\bx)=\eta\biggl(\frac{\log\langle\log\langle\log\langle\bx\rangle\rangle\rangle}{\log\langle\log\langle\log\langle n\rangle\rangle\rangle}\biggr)\,,
\]
is a cutoff function such that $\eta_{n}(\bx)=0$ if $\left|\bx\right|\geq n$
and $\eta_{n}(\bx)=1$ if $\left|\bx\right|\leq\gamma_{n}$ where
\[
\gamma_{n}=\exp\left(\exp\left(\sqrt{\langle\log\langle\log\langle n\rangle\rangle\rangle}-1\right)-1\right)-1\,.
\]
Explicitly, we have
\begin{equation}
\bigl|\bnabla\eta_{n}(\bx)\bigr|\leq\frac{\left\Vert \eta^{\prime}\right\Vert _{\infty}}{\log\langle\log\langle\log\langle n\rangle\rangle\rangle}\frac{1}{\langle\bx\rangle\langle\log\langle\bx\rangle\rangle\langle\log\langle\log\langle\bx\rangle\rangle\rangle}\,,\label{eq:eta-grad}
\end{equation}
and
\begin{equation}
\bigl|\bnabla^{2}\eta_{n}(\bx)\bigr|\leq\frac{4\left\Vert \eta^{\prime}\right\Vert _{\infty}+2\left\Vert \eta^{\prime\prime}\right\Vert _{\infty}}{\log\langle\log\langle\log\langle n\rangle\rangle\rangle}\frac{1}{\langle\bx\rangle^{2}\langle\log\langle\bx\rangle\rangle\langle\log\langle\log\langle\bx\rangle\rangle\rangle}\,.\label{eq:eta-grad2}
\end{equation}

We define the stream function associated to $\tilde{\bv}$ by the
following curvilinear integral,
\[
\tilde{\psi}(\bx)=\int_{\bzero}^{\bx}\tilde{\bv}^{\perp}\bcdot\rd\bx\,,
\]
so since $\tilde{\bv}/\weight\in L^{\infty}(\Omega)$, we have
\begin{equation}
\left|\tilde{\psi}(\bx)\right|\leq C\int_{0}^{|\bx|}\frac{1}{\langle r\rangle\langle\log\langle r\rangle\rangle}\,\rd r\leq C\log\langle\log\langle\bx\rangle\rangle\,.\label{eq:bound-psi}
\end{equation}
Now let $\tilde{\bv}_{n}=\bnabla^{\perp}\left(\eta_{n}\tilde{\psi}\right)$.
We have $\tilde{\bv}-\tilde{\bv}_{n}=\left(1-\eta_{n}\right)\tilde{\bv}-\tilde{\psi}\bnabla^{\perp}\eta_{n}$
so
\[
\bigl\Vert\bu\otimes\bigl(\tilde{\bv}-\tilde{\bv}_{n}\bigr)\bigr\Vert_{L^{2}(\Omega)}\leq\bigl\Vert\bigl(1-\eta_{n}\bigr)\bu\otimes\tilde{\bv}\bigr\Vert_{L^{2}(\Omega)}+\bigl\Vert\tilde{\psi}\bu\otimes\bnabla\eta_{n}\bigr\Vert_{L^{2}(\Omega)}\,.
\]
The first term goes to zero as $n\to\infty$ since $\bu\otimes\tilde{\bv}\in L^{2}(\Omega)$
because $\bu\weight\in L^{2}(\Omega)$ and $\tilde{\bv}/\weight\in L^{\infty}(\Omega)$.
Using the bound \eqref{eta-grad} on $\bnabla\eta_{n}$ and the bound
\eqref{bound-psi} on $\tilde{\psi}$, we obtain
\[
\bigl\Vert\tilde{\psi}\bu\otimes\bnabla\eta_{n}\bigr\Vert_{L^{2}(\Omega)}\leq\frac{C}{\log\langle\log\langle\log\langle n\rangle\rangle\rangle}\bigl\Vert\bu\weight\bigr\Vert_{L^{2}(\Omega)}\,,
\]
so the second term also goes to zero as $n\to\infty$, since $\bu\weight\in L^{2}(\Omega)$
in view of \lemref{generalized-hardy}. Finally, we have
\[
\bigl\Vert\bnabla\tilde{\bv}-\bnabla\tilde{\bv}_{n}\bigr\Vert_{L^{2}(\Omega)}\leq\bigl\Vert\bigl(1-\eta_{n}\bigr)\bnabla\tilde{\bv}\bigr\Vert_{L^{2}(\Omega)}+2\bigl\Vert\bnabla\eta_{n}\otimes\tilde{\bv}\bigr\Vert_{L^{2}(\Omega)}+\bigl\Vert\tilde{\psi}\bnabla^{2}\eta_{n}\bigr\Vert_{L^{2}(\Omega)}\,.
\]
The first term goes to zero since $\bnabla\tilde{\bv}\in L^{2}(\Omega)$.
For the second term, using \eqref{eta-grad} we have 
\[
\bigl\Vert\bnabla\eta_{n}\otimes\tilde{\bv}\bigr\Vert_{L^{2}(\Omega)}\leq\frac{C}{\log\langle\log\langle\log\langle n\rangle\rangle\rangle}\bigl\Vert\bv\weight\bigr\Vert_{L^{2}(\Omega)}\,,
\]
and using \eqref{eta-grad2} for the third term,
\[
\bigl\Vert\tilde{\psi}\bnabla^{2}\eta_{n}\bigr\Vert_{L^{2}(\Omega)}\leq\frac{C}{\log\langle\log\langle\log\langle n\rangle\rangle\rangle}\bigl\Vert\langle\bx\rangle^{-2}\bigr\Vert_{L^{2}(\Omega)}\,,
\]
so both converge to zero and $\tilde{\bv}_{n}\to\tilde{\bv}$ in $\dot{H}_{\sigma}^{1}(\Omega)$.
Finally, the sequence $\left(\tilde{\bv}_{n}\right)_{n\in\mathbb{N}}$
can be smoothed by using the standard mollification technique.
\end{proof}
Using the previous lemma, we can replace $\bphi$ by $\tilde{\bv}$
in the definition of the weak solution $\bu$:
\begin{lem}
\label{lem:ubar-in-u}If $\bu$ is a weak solution in $\Omega=\mathbb{R}^{2}$,
then
\[
\bigl\langle\bnabla\bu,\bnabla\tilde{\bv}\bigr\rangle_{L^{2}(\Omega)}+\bigl\langle\bu\bcdot\bnabla\bu,\tilde{\bv}\bigr\rangle_{L^{2}(\Omega)}=\bigl\langle\bbF,\bnabla\tilde{\bv}\bigr\rangle_{L^{2}(\Omega)}\,,
\]
for any $\tilde{\bv}\in\dot{H}_{\sigma}^{1}(\Omega)$ satisfying $\tilde{\bv}/\weight\in L^{\infty}(\Omega)$.\end{lem}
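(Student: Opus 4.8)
The plan is to deduce the identity by a density argument: approximate $\tilde{\bv}$ by test functions in $C_{0,\sigma}^{\infty}(\Omega)$, write the weak formulation of $\bu$ against each of them, and pass to the limit. The only genuinely delicate point is the convective term, since $\tilde{\bv}$ need not have compact support, and the approximation supplied by \lemref{ubar-approx} has been tailored precisely so that this term behaves.

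First I would check that all three terms in the asserted identity are well-defined. The two linear terms are fine because $\bnabla\bu,\bnabla\tilde{\bv},\bbF\in L^{2}(\Omega)$. For the convective term, write $(\bu\bcdot\bnabla\bu)\bcdot\tilde{\bv}=\sum_{i,j}u_{i}\,(\partial_{i}u_{j})\,\tilde{v}_{j}$ and note that $|\bu|\,|\tilde{\bv}|\leq\Vert\tilde{\bv}/\weight\Vert_{L^{\infty}(\Omega)}\,|\bu\weight|$ with $\bu\weight\in L^{2}(\Omega)$ by \lemref{generalized-hardy}; hence $\bu\otimes\tilde{\bv}\in L^{2}(\Omega)$ and the integrand is dominated by $|\bnabla\bu|\,|\bu\weight|\in L^{1}(\Omega)$. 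Regrouping the product then gives
\[
\bigl\langle\bu\bcdot\bnabla\bu,\tilde{\bv}\bigr\rangle_{L^{2}(\Omega)}=\bigl\langle\bnabla\bu,\bu\otimes\tilde{\bv}\bigr\rangle_{L^{2}(\Omega)}\,,
\]
an honest pairing of two $L^{2}$ fields, and the identical rewriting holds with $\tilde{\bv}$ replaced by any $\bphi\in C_{0,\sigma}^{\infty}(\Omega)$. Next I would apply \lemref{ubar-approx} to obtain $(\tilde{\bv}_{n})_{n\in\mathbb{N}}\subset C_{0,\sigma}^{\infty}(\Omega)$ with $\tilde{\bv}_{n}\to\tilde{\bv}$ in $\dot{H}_{\sigma}^{1}(\Omega)$ and $\bu\otimes\tilde{\bv}_{n}\to\bu\otimes\tilde{\bv}$ in $L^{2}(\Omega)$, and test the weak formulation \eqref{weak-solution} against $\bphi=\tilde{\bv}_{n}$; using the rewriting above this reads, for every $n$,
\[
\bigl\langle\bnabla\bu,\bnabla\tilde{\bv}_{n}\bigr\rangle_{L^{2}(\Omega)}+\bigl\langle\bnabla\bu,\bu\otimes\tilde{\bv}_{n}\bigr\rangle_{L^{2}(\Omega)}=\bigl\langle\bbF,\bnabla\tilde{\bv}_{n}\bigr\rangle_{L^{2}(\Omega)}\,.
\]

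It then remains to let $n\to\infty$ in each term. The first and third converge to $\bigl\langle\bnabla\bu,\bnabla\tilde{\bv}\bigr\rangle_{L^{2}(\Omega)}$ and $\bigl\langle\bbF,\bnabla\tilde{\bv}\bigr\rangle_{L^{2}(\Omega)}$ because $\bnabla\tilde{\bv}_{n}\to\bnabla\tilde{\bv}$ in $L^{2}(\Omega)$ while $\bnabla\bu$ and $\bbF$ are fixed in $L^{2}(\Omega)$; the middle one converges to $\bigl\langle\bnabla\bu,\bu\otimes\tilde{\bv}\bigr\rangle_{L^{2}(\Omega)}$ because $\bu\otimes\tilde{\bv}_{n}\to\bu\otimes\tilde{\bv}$ in $L^{2}(\Omega)$ while $\bnabla\bu$ is fixed in $L^{2}(\Omega)$, and by the first rewriting this limit equals $\bigl\langle\bu\bcdot\bnabla\bu,\tilde{\bv}\bigr\rangle_{L^{2}(\Omega)}$. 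This yields the claimed identity. The step I expect to be the main obstacle is exactly this passage to the limit in the convective term for a non-compactly-supported $\tilde{\bv}$, but it has effectively been carried out already in \lemref{ubar-approx}: the bound $\tilde{\bv}/\weight\in L^{\infty}(\Omega)$, the weighted logarithmic Hardy inequality, and the slow triple-logarithmic cut-off used there are precisely what force both $\bnabla(\tilde{\bv}-\tilde{\bv}_{n})$ and $\bu\otimes(\tilde{\bv}-\tilde{\bv}_{n})$ to zero in $L^{2}(\Omega)$, and paired against the fixed $L^{2}$ data $\bnabla\bu$ and $\bbF$ this is all the convergence one needs. With that lemma in hand the rest is a routine continuity argument and I do not expect any further difficulty.
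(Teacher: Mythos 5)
Your proposal is correct and follows essentially the same route as the paper: approximate $\tilde{\bv}$ by the sequence from \lemref{ubar-approx}, test the weak formulation against $\tilde{\bv}_{n}$, and pass to the limit using the strong convergences $\bnabla\tilde{\bv}_{n}\to\bnabla\tilde{\bv}$ and $\bu\otimes\tilde{\bv}_{n}\to\bu\otimes\tilde{\bv}$ in $L^{2}(\Omega)$. The explicit regrouping $\bigl\langle\bu\bcdot\bnabla\bu,\tilde{\bv}\bigr\rangle_{L^{2}(\Omega)}=\bigl\langle\bnabla\bu,\bu\otimes\tilde{\bv}\bigr\rangle_{L^{2}(\Omega)}$ you spell out is exactly the bound the paper uses implicitly in its estimate of the convective term.
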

\begin{proof}
Let $\left(\tilde{\bv}_{n}\right)_{n\in\mathbb{N}}\subset C_{0,\sigma}^{\infty}(\Omega)$
be the approximation of $\tilde{\bv}$ constructed in \lemref{ubar-approx}.
Since $\bu$ is a weak solution, we have
\begin{equation}
\bigl\langle\bnabla\bu,\bnabla\tilde{\bv}_{n}\bigr\rangle_{L^{2}(\Omega)}+\bigl\langle\bu\bcdot\bnabla\bu,\tilde{\bv}_{n}\bigr\rangle_{L^{2}(\Omega)}=\bigl\langle\bbF,\bnabla\tilde{\bv}_{n}\bigr\rangle_{L^{2}(\Omega)}\,.\label{eq:ubarn-in-u}
\end{equation}
Since
\[
\left|\bigl\langle\bu\bcdot\bnabla\bu,\tilde{\bv}-\tilde{\bv}_{n}\bigr\rangle_{L^{2}(\Omega)}\right|\leq\bigl\Vert\bnabla\bu\bigr\Vert_{L^{2}(\Omega)}\bigl\Vert\bu\otimes\bigl(\tilde{\bv}-\tilde{\bv}_{n}\bigr)\bigr\Vert_{L^{2}(\Omega)}\,,
\]
by \lemref{ubar-approx}, we obtain the claimed result by passing
to the limit in \eqref{ubarn-in-u}.
\end{proof}
We can also replace $\bphi$ by $\bu$ in the definition of the weak
solution $\tilde{\bu}$:
\begin{lem}
\label{lem:u-in-ubar}If $\tilde{\bu}=\bu_{\infty}+\tilde{\bv}$ is
a weak solution in $\Omega=\mathbb{R}^{2}$ with $\bu_{\infty}\in\mathbb{R}^{2}$
and $\tilde{\bv}/\weight\in L^{\infty}(\Omega)$, then
\[
\bigl\langle\bnabla\tilde{\bv},\bnabla\bu\bigr\rangle_{L^{2}(\Omega)}-\bigl\langle\tilde{\bu}\bcdot\bnabla\bu,\tilde{\bv}\bigr\rangle_{L^{2}(\Omega)}=\bigl\langle\bbF,\bnabla\bu\bigr\rangle_{L^{2}(\Omega)}\,,
\]
for any $\bu\in\dot{H}_{\sigma}^{1}(\Omega)$.\end{lem}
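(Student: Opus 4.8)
The plan is to start from the weak formulation \eqref{weak-solution} satisfied by $\tilde{\bu}$, shift the convective term onto the test function by an integration by parts, and then widen the class of admissible test functions from $C_{0,\sigma}^{\infty}(\Omega)$ to all of $\dot{H}_{\sigma}^{1}(\Omega)$ by density. Since $\bu_{\infty}$ is constant, $\bnabla\tilde{\bu}=\bnabla\tilde{\bv}$ and $\tilde{\bu}\bcdot\bnabla\tilde{\bu}=\tilde{\bu}\bcdot\bnabla\tilde{\bv}$, so \eqref{weak-solution} for $\tilde{\bu}$ becomes
\[
\bigl\langle\bnabla\tilde{\bv},\bnabla\bphi\bigr\rangle_{L^{2}(\Omega)}+\bigl\langle\tilde{\bu}\bcdot\bnabla\tilde{\bv},\bphi\bigr\rangle_{L^{2}(\Omega)}=\bigl\langle\bbF,\bnabla\bphi\bigr\rangle_{L^{2}(\Omega)}\,,
\]
for $\bphi\in C_{0,\sigma}^{\infty}(\Omega)$. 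As $\tilde{\bu}=\bu_{\infty}+\tilde{\bv}\in H_{\loc}^{1}(\Omega)$ is divergence-free and $\bphi$ has compact support, I would integrate by parts in the convective term, turning it into $-\bigl\langle\tilde{\bu}\bcdot\bnabla\bphi,\tilde{\bv}\bigr\rangle_{L^{2}(\Omega)}$, which gives
\[
\bigl\langle\bnabla\tilde{\bv},\bnabla\bphi\bigr\rangle_{L^{2}(\Omega)}-\bigl\langle\tilde{\bu}\bcdot\bnabla\bphi,\tilde{\bv}\bigr\rangle_{L^{2}(\Omega)}=\bigl\langle\bbF,\bnabla\bphi\bigr\rangle_{L^{2}(\Omega)}\,,
\]
for all $\bphi\in C_{0,\sigma}^{\infty}(\Omega)$, an identity in which $\bphi$ enters only through $\bnabla\bphi$.

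The next point is that $\tilde{\bu}\otimes\tilde{\bv}\in L^{2}(\Omega)$: from $\tilde{\bv}/\weight\in L^{\infty}(\Omega)$ and $\weight\leq1$ one gets $\bigl|\tilde{\bu}\otimes\tilde{\bv}\bigr|\leq\bigl(\left|\bu_{\infty}\right|+C\weight\bigr)C\weight\leq C\weight$, and $\weight\in L^{2}(\mathbb{R}^{2})$. Hence each of the three terms above is continuous in $\bnabla\bphi$ for the $L^{2}(\Omega)$-norm, since in particular $\bigl|\bigl\langle\tilde{\bu}\bcdot\bnabla\bphi,\tilde{\bv}\bigr\rangle_{L^{2}(\Omega)}\bigr|\leq\bigl\Vert\tilde{\bu}\otimes\tilde{\bv}\bigr\Vert_{L^{2}(\Omega)}\bigl\Vert\bnabla\bphi\bigr\Vert_{L^{2}(\Omega)}$. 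It is therefore enough to approximate a given $\bu\in\dot{H}_{\sigma}^{1}(\Omega)$ by $\bphi_{n}\in C_{0,\sigma}^{\infty}(\Omega)$ with $\bnabla\bphi_{n}\to\bnabla\bu$ in $L^{2}(\Omega)$, and pass to the limit, using $\bnabla\tilde{\bv},\bbF\in L^{2}(\Omega)$ and $\bigl\langle\tilde{\bu}\bcdot\bnabla\bphi_{n},\tilde{\bv}\bigr\rangle_{L^{2}(\Omega)}\to\bigl\langle\tilde{\bu}\bcdot\bnabla\bu,\tilde{\bv}\bigr\rangle_{L^{2}(\Omega)}$.

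For the approximation I would reduce to \propref{completion-omega} rather than to $\dot{H}_{0,\sigma}^{1}(\Omega)$, which is too small. Choosing $\bv_{1},\bv_{2}\in C_{0,\sigma}^{\infty}(\Omega)$ with $\fint_{\omega}\bv_{i}=\be_{i}$ as in its proof and setting $\bw=\bu-\sum_{i=1}^{2}(\fint_{\omega}\bu)_{i}\bv_{i}$, the field $\bw$ lies in $\dot{H}_{\sigma}^{1}(\Omega)$ with $\fint_{\omega}\bw=\bzero$, hence in $\dot{H}_{0,\sigma}^{1}(\Omega,\omega)$ (the trace condition being vacuous for $\Omega=\mathbb{R}^{2}$); by \propref{completion-omega} there are $\bw_{n}\in C_{0,\sigma}^{\infty}(\Omega,\omega)$ with $\bnabla\bw_{n}\to\bnabla\bw$ in $L^{2}(\Omega)$, so $\bphi_{n}=\bw_{n}+\sum_{i=1}^{2}(\fint_{\omega}\bu)_{i}\bv_{i}\in C_{0,\sigma}^{\infty}(\Omega)$ satisfies $\bnabla\bphi_{n}\to\bnabla\bu$ in $L^{2}(\Omega)$. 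The main obstacle is precisely the two-dimensional density failure recalled in the introduction, that $C_{0,\sigma}^{\infty}(\mathbb{R}^{2})$ is not dense in $\dot{H}_{\sigma}^{1}(\mathbb{R}^{2})$; the argument goes through only because after the integration by parts the identity involves $\bphi$ solely through $\bnabla\bphi$, so the constant at infinity plays no role and the gradient approximation furnished by \propref{completion-omega} suffices, while the hypothesis $\tilde{\bv}/\weight\in L^{\infty}(\Omega)$ is used exactly to secure $\tilde{\bu}\otimes\tilde{\bv}\in L^{2}(\Omega)$.
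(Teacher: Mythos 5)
Your proposal is correct and follows essentially the same route as the paper: start from the weak formulation for $\tilde{\bu}$, integrate the convective term by parts onto the test function, note that $\tilde{\bu}\otimes\tilde{\bv}\in L^{2}(\Omega)$ thanks to $\tilde{\bv}/\weight\in L^{\infty}(\Omega)$ and $\weight\in L^{2}(\mathbb{R}^{2})$, and pass to the limit along a sequence $\bphi_{n}\in C_{0,\sigma}^{\infty}(\Omega)$ with $\bnabla\bphi_{n}\to\bnabla\bu$ in $L^{2}(\Omega)$. You are in fact slightly more explicit than the paper on one point it leaves implicit, namely that \propref{completion-omega} only yields the approximating sequence after subtracting $\sum_{i}(\fint_{\omega}\bu)_{i}\bv_{i}$ to reduce to $\dot{H}_{0,\sigma}^{1}(\Omega,\omega)$.
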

\begin{proof}
By \propref{completion-omega}, let $\left(\bu_{n}\right)_{n\in\mathbb{N}}\subset C_{0,\sigma}^{\infty}(\Omega)$
be a sequence converging to $\bu$ in $\dot{H}_{\sigma}^{1}(\Omega)$.
Since $\tilde{\bu}=\bu_{\infty}+\tilde{\bv}$ is a weak solution,
we have
\[
\bigl\langle\bnabla\tilde{\bv},\bnabla\bu_{n}\bigr\rangle_{L^{2}(\Omega)}+\bigl\langle\tilde{\bu}\bcdot\bnabla\tilde{\bv},\bu_{n}\bigr\rangle_{L^{2}(\Omega)}=\bigl\langle\bbF,\bnabla\bu_{n}\bigr\rangle_{L^{2}(\Omega)}\,,
\]
or after an integration by parts,
\[
\bigl\langle\bnabla\tilde{\bv},\bnabla\bu_{n}\bigr\rangle_{L^{2}(\Omega)}-\bigl\langle\tilde{\bu}\bcdot\bnabla\bu_{n},\tilde{\bv}\bigr\rangle_{L^{2}(\Omega)}=\bigl\langle\bbF,\bnabla\bu_{n}\bigr\rangle_{L^{2}(\Omega)}\,.
\]
We can easily pass to the limit in the first and last terms. For the
second term, we have
\[
\left|\bigl\langle\tilde{\bu}\bcdot\bnabla\left(\bu-\bu_{n}\right),\tilde{\bv}\bigr\rangle_{L^{2}(\Omega)}\right|\leq\bigl\Vert\tilde{\bu}\otimes\tilde{\bv}\bigr\Vert_{L^{2}(\Omega)}\bigl\Vert\bnabla\bu-\bnabla\bu_{n}\bigr\Vert_{L^{2}(\Omega)}\leq C\bigl\Vert\bnabla\bu-\bnabla\bu_{n}\bigr\Vert_{L^{2}(\Omega)}\,,
\]
and the lemma is proven.
\end{proof}
We now prove the following consequence of the integration by parts:
\begin{lem}
\label{lem:ipp}For $\Omega=\mathbb{R}^{2}$, if $\tilde{\bv}\in\dot{H}_{\sigma}^{1}(\Omega)$
satisfies $\tilde{\bv}/\weight\in L^{\infty}(\Omega)$, then
\[
\bigl\langle\bu\bcdot\bnabla\tilde{\bv},\tilde{\bv}\bigr\rangle_{L^{2}(\Omega)}=0\,,
\]
for any $\bu\in\dot{H}_{\sigma}^{1}(\Omega)$.\end{lem}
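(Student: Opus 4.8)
The strategy is to establish the identity first in the elementary case where $\tilde{\bv}$ is smooth with compact support, and then to pass to the limit using the approximation constructed in \lemref{ubar-approx}. Before anything else one should check that the trilinear term makes sense: by \lemref{generalized-hardy} (applied with $\lambda$ any fixed bounded subset of positive measure) we have $\bu\weight\in L^{2}(\Omega)$, and combined with $\tilde{\bv}/\weight\in L^{\infty}(\Omega)$ this gives $\bu\otimes\tilde{\bv}\in L^{2}(\Omega)$; together with $\bnabla\tilde{\bv}\in L^{2}(\Omega)$ this shows that $\bigl\langle\bu\bcdot\bnabla\tilde{\bv},\tilde{\bv}\bigr\rangle_{L^{2}(\Omega)}$ is an absolutely convergent integral with
\[
\Bigl|\bigl\langle\bu\bcdot\bnabla\tilde{\bv},\tilde{\bv}\bigr\rangle_{L^{2}(\Omega)}\Bigr|\leq\bigl\Vert\bu\otimes\tilde{\bv}\bigr\Vert_{L^{2}(\Omega)}\bigl\Vert\bnabla\tilde{\bv}\bigr\Vert_{L^{2}(\Omega)}\,.
\]

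Next, let $(\tilde{\bv}_{n})_{n\in\mathbb{N}}\subset C_{0,\sigma}^{\infty}(\Omega)$ be the sequence furnished by \lemref{ubar-approx}. For each fixed $n$ the scalar function $|\tilde{\bv}_{n}|^{2}$ lies in $C_{0}^{\infty}(\Omega)$, and since $\sum_{i,j}u_{i}(\partial_{i}\tilde{v}_{n,j})\tilde{v}_{n,j}=\tfrac{1}{2}\bu\bcdot\bnabla|\tilde{\bv}_{n}|^{2}$ pointwise, the fact that $\bu$ is weakly divergence-free gives $\bigl\langle\bu\bcdot\bnabla\tilde{\bv}_{n},\tilde{\bv}_{n}\bigr\rangle_{L^{2}(\Omega)}=\tfrac{1}{2}\int_{\Omega}\bu\bcdot\bnabla|\tilde{\bv}_{n}|^{2}=0$. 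To conclude I would write
\[
\bigl\langle\bu\bcdot\bnabla\tilde{\bv},\tilde{\bv}\bigr\rangle_{L^{2}(\Omega)}=\bigl\langle\bu\bcdot\bnabla(\tilde{\bv}-\tilde{\bv}_{n}),\tilde{\bv}\bigr\rangle_{L^{2}(\Omega)}+\bigl\langle\bu\bcdot\bnabla\tilde{\bv}_{n},\tilde{\bv}-\tilde{\bv}_{n}\bigr\rangle_{L^{2}(\Omega)}+\bigl\langle\bu\bcdot\bnabla\tilde{\bv}_{n},\tilde{\bv}_{n}\bigr\rangle_{L^{2}(\Omega)}\,,
\]
where the last term vanishes, the first is bounded by $\bigl\Vert\bu\otimes\tilde{\bv}\bigr\Vert_{L^{2}(\Omega)}\bigl\Vert\bnabla\tilde{\bv}-\bnabla\tilde{\bv}_{n}\bigr\Vert_{L^{2}(\Omega)}$, and the second by $\bigl\Vert\bu\otimes(\tilde{\bv}-\tilde{\bv}_{n})\bigr\Vert_{L^{2}(\Omega)}\bigl\Vert\bnabla\tilde{\bv}_{n}\bigr\Vert_{L^{2}(\Omega)}$. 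Since \lemref{ubar-approx} gives $\bnabla\tilde{\bv}_{n}\to\bnabla\tilde{\bv}$ in $L^{2}(\Omega)$ (so $\bigl\Vert\bnabla\tilde{\bv}_{n}\bigr\Vert_{L^{2}(\Omega)}$ is bounded) and $\bu\otimes\tilde{\bv}_{n}\to\bu\otimes\tilde{\bv}$ in $L^{2}(\Omega)$, both terms tend to $0$ as $n\to\infty$, which proves the identity.

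The only delicate points are the finiteness of the trilinear term in the first step and the simultaneous strong $L^{2}$-convergences $\bnabla\tilde{\bv}_{n}\to\bnabla\tilde{\bv}$ and $\bu\otimes\tilde{\bv}_{n}\to\bu\otimes\tilde{\bv}$. The latter is exactly what \lemref{ubar-approx} was designed to provide — this is why the refined triple-logarithm cutoff, rather than the cruder cutoff of \propref{completion-omega}, is needed — and with that lemma in hand there is no further obstacle.
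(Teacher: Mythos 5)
Your proposal is correct and follows essentially the same route as the paper: approximate $\tilde{\bv}$ by the sequence of \lemref{ubar-approx}, use the vanishing of the trilinear form in the smooth compactly supported case, and pass to the limit via the two strong convergences $\bnabla\tilde{\bv}_{n}\to\bnabla\tilde{\bv}$ and $\bu\otimes\tilde{\bv}_{n}\to\bu\otimes\tilde{\bv}$ in $L^{2}(\Omega)$ together with Cauchy--Schwarz. The only (harmless) difference is algebraic: you start from the diagonal identity $\bigl\langle\bu\bcdot\bnabla\tilde{\bv}_{n},\tilde{\bv}_{n}\bigr\rangle_{L^{2}(\Omega)}=0$ and telescope, whereas the paper starts from the polarized identity $\bigl\langle\bu\bcdot\bnabla\tilde{\bv},\tilde{\bv}_{n}\bigr\rangle_{L^{2}(\Omega)}+\bigl\langle\bu\bcdot\bnabla\tilde{\bv}_{n},\tilde{\bv}\bigr\rangle_{L^{2}(\Omega)}=0$ and lets both terms converge to the same limit.
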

\begin{proof}
Let $\left(\tilde{\bv}_{n}\right)_{n\in\mathbb{N}}\subset C_{0,\sigma}^{\infty}(\Omega)$
be the approximation of $\tilde{\bv}$ constructed in \lemref{ubar-approx}.
By integrating by parts, we have
\begin{equation}
\bigl\langle\bu\bcdot\bnabla\tilde{\bv},\tilde{\bv}_{n}\bigr\rangle_{L^{2}(\Omega)}+\bigl\langle\bu\bcdot\bnabla\tilde{\bv}_{n},\tilde{\bv}\bigr\rangle_{L^{2}(\Omega)}=0\,.\label{eq:ipp-approx}
\end{equation}
We have
\[
\left|\bigl\langle\bu\bcdot\bnabla\tilde{\bv},\bv-\tilde{\bv}_{n}\bigr\rangle_{L^{2}(\Omega)}\right|\leq\bigl\Vert\bnabla\tilde{\bv}\bigr\Vert_{L^{2}(\Omega)}\bigl\Vert\bu\otimes\bigl(\tilde{\bv}-\tilde{\bv}_{n}\bigr)\bigr\Vert_{L^{2}(\Omega)}\,,
\]
and
\[
\left|\bigl\langle\bu\bcdot\bnabla\bigl(\tilde{\bv}-\tilde{\bv}_{n}\bigr),\tilde{\bv}\bigr\rangle_{L^{2}(\Omega)}\right|\leq\bigl\Vert\bu\otimes\tilde{\bv}\bigr\Vert_{L^{2}(\Omega)}\bigl\Vert\bnabla\tilde{\bv}-\bnabla\tilde{\bv}_{n}\bigr\Vert_{L^{2}(\Omega)}\,,
\]
so by using \lemref{ubar-approx}, we can pass to the limit in \eqref{ipp-approx}
and the lemma is proven.
\end{proof}
We now can prove our weak-strong uniqueness results by using some
standard method (\citealp[Theorem X.3.2]{Galdi-IntroductiontoMathematical2011};
\citealt[Theorem 6]{Hillairet.Wittwer-Asymptoticdescriptionof2011}):
\begin{proof}[Proof of \thmref{uniqueness}]
Let $\tilde{\bv}=\tilde{\bu}-\bu_{\infty}$, $\bv=\bu-\bu_{\infty}$,
and $\bd=\bu-\tilde{\bu}=\bv-\tilde{\bv}$. By \lemref{ubar-in-u},
we have
\[
\bigl\langle\bnabla\bv,\bnabla\tilde{\bv}\bigr\rangle_{L^{2}(\Omega)}+\bigl\langle\bu\bcdot\bnabla\bv,\tilde{\bv}\bigr\rangle_{L^{2}(\Omega)}=\bigl\langle\bbF,\bnabla\tilde{\bu}\bigr\rangle_{L^{2}(\Omega)}\,,
\]
and by \lemref{u-in-ubar},
\[
\bigl\langle\bnabla\tilde{\bv},\bnabla\bv\bigr\rangle_{L^{2}(\Omega)}-\bigl\langle\tilde{\bu}\bcdot\bnabla\bv,\tilde{\bv}\bigr\rangle_{L^{2}(\Omega)}=\bigl\langle\bbF,\bnabla\bu\bigr\rangle_{L^{2}(\Omega)}\,,
\]
so, we obtain
\begin{eqnarray*}
\bigl\Vert\bnabla\bd\bigr\Vert_{L^{2}(\Omega)}^{2} & = & \bigl\Vert\bnabla\bu\bigr\Vert_{L^{2}(\Omega)}^{2}+\bigl\Vert\bnabla\tilde{\bu}\bigr\Vert_{L^{2}(\Omega)}^{2}-\bigl\langle\bnabla\bv,\bnabla\tilde{\bv}\bigr\rangle_{L^{2}(\Omega)}-\bigl\langle\bnabla\bv,\bnabla\tilde{\bv}\bigr\rangle_{L^{2}(\Omega)}\\
 & = & \bigl\Vert\bnabla\bu\bigr\Vert_{L^{2}(\Omega)}^{2}-\bigl\langle\bbF,\bnabla\tilde{\bu}\bigr\rangle_{L^{2}(\Omega)}+\bigl\Vert\bnabla\tilde{\bu}\bigr\Vert_{L^{2}(\Omega)}^{2}-\bigl\langle\bbF,\bnabla\bu\bigr\rangle_{L^{2}(\Omega)}+\bigl\langle\bd\bcdot\bnabla\bv,\tilde{\bv}\bigr\rangle_{L^{2}(\Omega)}\,.
\end{eqnarray*}
Using the energy inequality \eqref{energy-inequality} for both weak
solutions and \lemref{ipp},
\[
\bigl\Vert\bnabla\bd\bigr\Vert_{L^{2}(\Omega)}^{2}\leq\bigl\langle\bd\bcdot\bnabla\bv,\tilde{\bv}\bigr\rangle_{L^{2}(\Omega)}=\bigl\langle\bd\bcdot\bnabla\bd,\tilde{\bv}\bigr\rangle_{L^{2}(\Omega)}\leq\bigl\Vert\bnabla\bd\bigr\Vert_{L^{2}(\Omega)}\bigl\Vert\bd\tilde{\bv}\bigr\Vert_{L^{2}(\Omega)}^{2}\,,
\]
so by \lemref{generalized-hardy}, we obtain
\[
\bigl\Vert\bnabla\bd\bigr\Vert_{L^{2}(\Omega)}\leq\bigl\Vert\bd\tilde{\bv}\bigr\Vert_{L^{2}(\Omega)}^{2}\leq\bigl\Vert\bd\weight\bigr\Vert_{L^{2}(\Omega)}\bigl\Vert\tilde{\bv}/\weight\bigr\Vert_{L^{\infty}(\Omega)}^{2}\leq C\delta\bigl\Vert\bnabla\bd\bigr\Vert_{L^{2}(\Omega)}\,,
\]
since by hypothesis $\fint_{\omega}\bd=\bzero$. Therefore, for $\delta<C^{-1}$,
$\bnabla\bd=\bzero$, \emph{i.e.} $\bd=\bzero$.
\end{proof}

\subsubsection*{Acknowledgments}

The authors would like to thank M. Hillairet and V. Šverák for valuable
comments and suggestions on a preliminary version of the manuscript.
This research was partially supported by the Swiss National Science
Foundation grants \href{http://p3.snf.ch/Project-161996}{161996}
and \href{http://p3.snf.ch/Project-171500}{171500}.

\bibliographystyle{merlin-doi}
\bibliography{paper}

\end{document}